\documentclass[11pt]{article}
\usepackage[T1]{fontenc}
\usepackage{lmodern}

\usepackage{amsmath,amssymb,amsthm,mathtools}
\usepackage{microtype}
\usepackage[a4paper,margin=26mm]{geometry}
\usepackage{enumitem,needspace}
\usepackage[hidelinks]{hyperref}
\usepackage[capitalise,nameinlink,noabbrev]{cleveref}
\hypersetup{pdftitle={Proper conflict-free degree-choosability below maximum average degree 8/3},pdfauthor={},pdfsubject={Graph coloring and maximum average degree}}
\newtheorem{theorem}{Theorem}[section]
\newtheorem{corollary}[theorem]{Corollary}
\newtheorem{conjecture}[theorem]{Conjecture}
\newtheorem{proposition}[theorem]{Proposition}
\newtheorem{lemma}[theorem]{Lemma}
\theoremstyle{remark}

\DeclareMathOperator{\mad}{mad}
\newcommand{\U}{\mathcal U}

\newcommand{\degtwo}{(\mathrm{degree}+2)}
\setlist[enumerate]{itemsep=2pt,topsep=5pt}
\title{Proper conflict-free degree-choosability of graphs
with maximum average degree less than $8/3$}

\author{
Huijuan Xu\thanks{Email: \texttt{xuhuijuan@zjnu.edu.com}}\\[4pt]
School of Mathematical Sciences,
Zhejiang Normal University
}

\date{\today}
\begin{document}
\maketitle
\begin{abstract}
A proper coloring $\varphi$ of a graph $G$ is called proper conflict-free
if, for every non-isolated vertex $v$, some color appears exactly once
in its open neighborhood $N_G(v)$.
For a nonnegative integer $k$, a graph $G$ is proper conflict-free
$(\mathrm{degree}+k)$-choosable if, for every list assignment $L$
satisfying $|L(v)|\ge d_G(v)+k$ for each $v\in V(G)$, there exists
a proper conflict-free coloring $\varphi$ of $G$ with
$\varphi(v)\in L(v)$ for each $v\in V(G)$.
We prove that every connected graph other than the $5$-cycle $C_5$
with maximum average degree less than $8/3$ is proper conflict-free
$(\mathrm{degree}+2)$-choosable.
This improves the maximum average degree bound $18/7$ of
Kashima, \v{S}krekovski and Xu, and removes the
assumption $g(G)\ge7$ from the $8/3$ theorem of
Qi, Song and Cao.
We also show that the $(\mathrm{degree}+2)$ list-size bound is best possible
for graphs with maximum average degree arbitrarily close to $8/3$ from below.
As an immediate consequence, our theorem implies that every planar
graph of girth at least $8$ is proper conflict-free
$(\mathrm{degree}+2)$-choosable.
\end{abstract}

\noindent\textbf{Keywords:} proper conflict-free coloring; list coloring; degree choosability; maximum average degree; planar graph\\

\section{Introduction}\label{sec:intro}

All graphs considered in this paper are finite, simple and undirected.
A coloring $\varphi$ of $G$ is \emph{proper} if adjacent vertices receive
distinct colors.
A proper coloring $\varphi$ of $G$ is \emph{proper conflict-free}
if, for every non-isolated vertex $v$, some color appears exactly
once in the neighborhood of $v$.
Fabrici, Lu\v{z}ar, Rindo\v{s}ov\'a and Sot\'ak~\cite{FLRS}
introduced this notion and investigated proper conflict-free colorings
of planar and outerplanar graphs.
Cho et al.~\cite{CCKP2025} further studied proper conflict-free
colorings of sparse graphs.

A \emph{list assignment} $L$ of a graph $G$ assigns to each vertex
$v$ a finite set $L(v)$ of permissible colors.
An \emph{$L$-coloring} of $G$ is a proper coloring $\varphi$
such that $\varphi(v)\in L(v)$ for every $v\in V(G)$.
A proper conflict-free $L$-coloring is an $L$-coloring that is
proper conflict-free.
For a nonnegative integer $k$, a graph $G$ is
\emph{proper conflict-free $(\mathrm{degree}+k)$-choosable}
if it admits a proper conflict-free $L$-coloring for every
list assignment $L$ satisfying
\[
|L(v)|\ge d_G(v)+k \qquad \text{for every }v\in V(G).
\]

A natural question is whether there exists a constant $k$
such that every graph is proper conflict-free
$(\mathrm{degree}+k)$-choosable.
For small values of $k$, it is known that $C_5$ is not proper
conflict-free $(\mathrm{degree}+2)$-choosable. Moreover, there are
infinitely many graphs that are not proper conflict-free
$(\mathrm{degree}+1)$-choosable~\cite[Proposition~4]{KSXouter}.

Motivated by these observations, Kashima, \v{S}krekovski and
Xu posed the following conjecture.
\begin{conjecture}[\cite{KSXouter,KSXlists}]
Every connected graph
other than $C_5$ is proper conflict-free $\degtwo$-choosable.
\end{conjecture}
They verified the conjecture for subcubic graphs~\cite{KSXlists}
and outerplanar graphs~\cite{KSXouter}.
Wang and Zhang~\cite{WZ} further verified the conjecture for
$K_4$-minor-free graphs of maximum degree at most $4$ and
outer-$1$-planar graphs of maximum degree at most $4$.

We consider this conjecture for graphs of bounded maximum
average degree.
The \emph{maximum average degree} of a nonempty graph $G$ is
\[
\mad(G)=
\max_{\substack{H\subseteq G\\ |V(H)|>0}}
\frac{2|E(H)|}{|V(H)|}.
\]
The \emph{girth} $g(G)$ is the length of a shortest cycle in $G$.
Kashima, \v{S}krekovski and Xu~\cite{KSXsparse} proved that
every connected graph $G$ other than $C_5$ with $\mad(G)<18/7$
is proper conflict-free $\degtwo$-choosable. More recently,
Qi, Song and Cao~\cite{QSC} proved that every graph $G$
with $\mad(G)<8/3$ and $g(G)\ge7$ is proper conflict-free
$\degtwo$-choosable.

Our main results are as follows.
The first theorem  removes the girth restriction from the
latter theorem while improving the maximum average degree
bound in the former theorem from $18/7$ to $8/3$.

\begin{theorem}\label{thm:main}
	Every connected graph $G$ other than $C_5$ with $\mad(G)<8/3$
	is proper conflict-free $\degtwo$-choosable.
\end{theorem}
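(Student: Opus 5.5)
We argue by a minimal counterexample combined with discharging. Let $G$ be a counterexample minimizing $|V(G)|+|E(G)|$, with a list assignment $L$, $|L(v)|\ge d_G(v)+2$, admitting no proper conflict-free $L$-coloring. Since $\mad$ is monotone under subgraphs and $C_5$ has $\mad=2<8/3$, the induction hypothesis applies to every connected component of a proper subgraph other than $C_5$. So the reducibility arguments must always check whether a deletion creates a $C_5$ component, and treat that case by a direct coloring. We may assume $G$ is not a cycle or path, since these are easy and covered by the subcubic result of \cite{KSXlists}. Then $\delta(G)\ge 2$: for a pendant vertex $v$ with neighbor $u$, color $G-v$ and extend. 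The vertex $v$ has at least $3$ colors, must avoid $\varphi(u)$, and must not destroy the unique color at $u$. The vertex $v$ itself is automatically fine, since its single neighbor gives a unique color. The proof of $\delta\ge2$ also checks the case where $G-v$ is $C_5$ or $u$ becomes isolated.

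\textbf{Reducible configurations.} A $2$-vertex is \emph{weak} if neither of its neighbors provides it a uniqueness witness easily; a $k$-thread is a path of $k$ consecutive $2$-vertices. The configurations we expect to prove reducible are:
\begin{enumerate}
\item there is no $3$-thread (and likely no $2$-thread with both ends of degree $3$);
\item a $3$-vertex is not incident with three $2$-vertices, and not with two $2$-vertices one of which lies on a $2$-thread;
\item a $2$-thread has endpoints of degree at least $4$, or an analogous restriction bounding how many $2$-vertices a $3$-vertex "feeds".
\end{enumerate}

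Each proof deletes the $2$-vertices involved, colors the rest by minimality, and recolors greedily. A $2$-vertex $x$ with neighbors $a,b$ has $|L(x)|\ge4$. It must differ from $\varphi(a)$ and $\varphi(b)$, and it needs $\varphi(a)\ne\varphi(b)$ for its own conflict-freeness. It also must not equal the current unique color at $a$ and at $b$, or must itself serve as the unique color there. A $3$-vertex $v$ has $5$ colors, and at most $4$ of them are forbidden by the analogous constraints. The delicate point is ensuring $\varphi(a)\ne\varphi(b)$ for the two ends of a thread. This is handled by deleting an end as well and recoloring it with its spare colors. Counting forbidden colors at each step is routine but casework-heavy.

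\textbf{Discharging.} Give each vertex charge $d(v)-8/3$, so the total is negative. A $2$-vertex has charge $-2/3$. A $3$-vertex has $1/3$, and a vertex of degree $d\ge4$ has $d-8/3\ge 4/3$. We use the following rules:
\begin{enumerate}
\item every vertex of degree $\ge3$ sends $1/3$ to each adjacent $2$-vertex, and through a $2$-thread sends $1/3$ to the far $2$-vertex as well;
\item $3$-vertices with a single $2$-neighbor adjacent to a $2$-thread may need to receive help, which is why configuration 3 is needed.
\end{enumerate}

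A $2$-vertex ends with $-2/3+2\cdot 1/3=0$. A $d$-vertex with $d\ge4$ sends at most $2d/3$, leaving $d-8/3-2d/3=(d-8)/3$. This is negative for $d<8$, so the rule must be refined. Instead we send $1/3$ per incident $2$-vertex and forbid $2$-threads at low-degree vertices, giving $d-8/3-d/3\ge0$ for $d\ge4$. The $3$-vertices, with at most one $2$-neighbor by configuration 2, stay nonnegative. This contradicts the negative total.

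\textbf{Main obstacle.} The hardest part is the reducibility proofs for $3$-vertices. This means showing that a $3$-vertex with two $2$-neighbors (where the charge $1/3-2/3<0$ fails) is reducible, together with the exact set of configurations that makes the discharging close at $8/3$. Near $8/3$ the extremal structures are precisely $3$-vertices each adjacent to two $2$-vertices. So we expect reducibility here to require a careful recoloring argument: choose the color of the $3$-vertex last, and use the $C_5$ exception explicitly. If the plain reduction fails, we would instead reduce a larger structure, such as a $3$-vertex together with both pendant threads, and exploit the two spare colors at each $2$-vertex.
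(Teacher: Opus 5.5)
There is a genuine gap. Your configurations 2 and 3 essentially match the paper's (R2) and (R6). Configuration 1 does not, and those configurations together do not close the discharging at $8/3$.

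\textbf{The local configurations.} Take your configuration 1 and delete a $3$-thread $u_1u_2u_3$ with ends $x,y$. Then $u_1$ must avoid $\varphi(x)$ and the singleton color $\gamma_x$ of $x$, and $u_3$ must avoid $\varphi(y),\gamma_y$. Also $u_2$ must avoid $\varphi(x),\varphi(y)$, for conflict-freeness at $u_1$ and $u_3$. Each $u_i$ may be left with the same two colors, while $\varphi(u_1),\varphi(u_2),\varphi(u_3)$ must be pairwise distinct (properness plus conflict-freeness at $u_2$). That is $2$-list-coloring a triangle, which fails. The paper does not exclude $3$-threads: (R4)--(R5) only bound their number at a $k$-vertex by $k-3$ or $k-4$. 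High-degree vertices carrying many $3$-threads are what make $8/3$ tight: a $d$-regular graph with every edge subdivided three times has average degree $8d/(3d+2)$ (compare $B_t$). By contrast, $3$-vertices with two $2$-neighbors are not extremal. A graph with only degrees $2$ and $3$, no adjacent $2$-vertices, and every $3$-vertex having two $2$-neighbors has average degree $5/2$.

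Such $3$-vertices are still allowed by (R6), which only forces the far ends $x_1,x_2$ of the two $2$-neighbors $u_1,u_2$ to be $4^+$-vertices. Your planned reduction fails there too. With $w$ the $3^+$-neighbor, the $3$-vertex must avoid $\varphi(w),\gamma_w,\varphi(x_1),\varphi(x_2)$ and may keep a single color. Then each $u_i$ may keep a single color, and both may equal $\varphi(w)$, killing the singleton there. So your $3$-vertices can end at $1/3-2/3<0$. The paper instead lets the $4^+$-end send extra charge to such $2$-vertices: $3/2$ in its scaling $3d-8$, i.e.\ $1/2$ in yours.

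\textbf{The $4^+$-vertices and the missing global reduction.} Your rules also leave $4^+$-vertices negative. A $4$-vertex with one $3^+$-neighbor and three incident $2$-threads starts with $4/3$ and sends $2$. Such a vertex is in fact reducible, but not by a local thread argument; proving this is the main content of the paper. The missing idea is to define the excess $E(x)=8-3r(x)-2a(x)-\tfrac32 b(x)-s(x)$ and show by discharging that some $4^+$-vertex is overloaded. One then reduces an overloaded vertex $v$ \emph{of minimum degree among overloaded vertices} together with \emph{all} its incident maximal threads at once. That step needs three ingredients:
\begin{enumerate}
\item The minimum-degree choice, to guarantee that no vertex of $G-D_v$ becomes isolated. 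The leftover cases are the configuration of \cref{lem:diamond} and the theta graphs of \cref{prop:theta}.
\item Two-element control sets for the $2$- and $3$-threads, fixed before $v$ is colored. The bound $|A|\ge s+t+2-r$ and pigeonhole then give a color for $v$ lying in at most one control set.
\item \cref{lem:coupled}, for a $3$-vertex that is the far end of two $1$-threads at $v$, where the two thread colors cannot be chosen independently.
\end{enumerate}
None of this appears in your plan, so as written your reducibility claims and your discharging rules do not meet.
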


The following result shows that the list-size bound in \cref{thm:main} is best possible.

\begin{theorem}\label{thm:sharp}
For every $\varepsilon>0$, there exists a connected graph
$G\not\cong C_5$ with $8/3-\varepsilon<\mad(G)<8/3$
that is not proper conflict-free
$(\mathrm{degree}+1)$-choosable.
\end{theorem}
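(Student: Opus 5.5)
The plan is to give an explicit family of counterexamples $G_k$ ($k\ge 1$), each with a list assignment $L_k$ with $|L_k(v)|=d_{G_k}(v)+1$ admitting no proper conflict-free $L_k$-coloring, and with $\mad(G_k)\to 8/3$ from below. The basic obstruction is the same one that makes $C_5$ exceptional. In a vertex $v$ of degree $2$, the proper conflict-free condition says exactly that its two neighbours get distinct colors. So on a $5$-cycle the coloring must be proper both on $C_5$ and on its "distance-two" graph, which is again a $5$-cycle. Together these form $K_5$, so $5$ colors are needed, whereas $(\mathrm{degree}+1)$ lists give only $3$. Since the statement requires $G\not\cong C_5$ and $\mad(G)$ close to $8/3$, I will keep this local obstruction and attach it to a sparse "dense-enough" host. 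The host is built from vertices of degree $2$ and $3$ in proportion close to $1:2$, since that proportion gives average degree exactly $8/3$.

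\emph{Step 1 (host).} For the host $D_k$, take a cubic graph $Q_k$ on $2k$ vertices (for instance a prism or a ladder closed up into a cycle) and subdivide $k$ of its edges once, chosen so that the subdivision vertices are spread out. This gives $2k$ vertices of degree $3$ and $k$ of degree $2$, so the average degree is exactly $8/3$. I then remove one edge, or replace one cubic vertex by the attachment to the gadget below, which pushes the global average to $8/3-O(1/k)$. To verify $\mad(G_k)<8/3$, I will use the standard discharging-free check. Every subgraph $H$ satisfies $2|E(H)|<\frac83|V(H)|$, because every subgraph of a graph that is "one edge short" of this $2:1$ structure loses at least one unit of degree sum relative to $\frac83|V(H)|$. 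Equivalently, I will check $3|E(H)|\le 4|V(H)|-1$ by counting degree-$3$ vertices of $H$ against degree-$\le 2$ ones. This step is routine bookkeeping.

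\emph{Step 2 (gadget and lists).} I attach a $5$-cycle $C=v_1\dots v_5$ to $D_k$ by a single edge $v_1u$, with $u\in D_k$. Vertices $v_2,\dots,v_5$ get the list $\{1,2,3\}$ and $v_1$ gets $\{1,2,3,4\}$. The danger is that $v_1$ uses color $4$, or that the edge $v_1u$ supplies $v_1$ with a uniquely colored neighbour. For the first issue, I will force $u$ to take color $4$ by choosing lists on a short path of degree-$2$ vertices of $D_k$ ending at $u$ (lists of size $3$ propagating a forced color, using the "two neighbours differ" constraint repeatedly). Then $v_1\ne 4$, so $v_1,\dots,v_5$ all lie in $\{1,2,3\}$. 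The conflict-free condition at $v_2,\dots,v_5$ forces $v_1,v_3$ distinct, $v_2,v_4$ distinct, $v_3,v_5$ distinct and $v_4,v_1$ distinct. Combined with properness, this gives a $3$-coloring of $K_5$ minus at most the edge $v_2v_5$, which still contains $K_4$. That is impossible, so no coloring exists regardless of how $D_k$ is colored.

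\emph{Main obstacle.} The hard part is Step 2's forcing: making $u$'s color (or some other mechanism) rule out the extra color of $v_1$ while keeping all lists of size exactly $\mathrm{degree}+1$ and not adding edges that push $\mad$ to $8/3$ or above. If forcing a single color along a path is impossible with lists of size $3$, my fallback is the family of non-$(\mathrm{degree}+1)$-choosable graphs from \cite[Proposition~4]{KSXouter}. I would use such a graph as the gadget, so that it fails for every color choice at the attachment vertex, and attach it to $D_k$ through a degree-$2$ vertex. Then failure follows from the gadget alone, and only the $\mad$ computation of Step 1 needs to be redone with the gadget's fixed contribution, which is $O(1)$ and is swamped as $k\to\infty$.
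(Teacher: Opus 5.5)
\textbf{There is a genuine gap: the forcing in Step 2 is never supplied.} Your $5$-cycle obstruction is correct once $\varphi(u)=4$, and that forcing cannot be avoided. If $\varphi(u)\ne4$, colour $v_1$ with $4$, colour $v_2,v_3,v_4$ with a permutation of $\{1,2,3\}$ such that $\varphi(v_2)\ne\varphi(u)$, and set $\varphi(v_5)=\varphi(v_2)$. This gives valid colours on the whole $5$-cycle.

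Your first mechanism, ``propagating a forced colour'' along a path of $2$-vertices, assumes some colour is already forced somewhere. You exhibit no list assignment on $D_k$ that forces one. Producing such a force is essentially the original problem. Suppose every proper conflict-free $L$-colouring of the host alone put $4$ on $u$. Then deleting $4$ from $L(u)$ gives a $(\mathrm{degree}+1)$-assignment of the host with no proper conflict-free colouring. So the host would already be the required example, and the $C_5$ would be superfluous.

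The fallback has the same defect. Attaching a non-$(\mathrm{degree}+1)$-choosable graph $F$ at a vertex $w$ raises $d(w)$, hence $|L(w)|$, by one, and gives $w$ a new neighbour $h$. The obstruction of $F$ then need not survive. For example, take $F=C_4=w\,a\,b\,c$ with $L(a)=L(b)=L(c)=\{1,2,3\}$ and now $L(w)=\{1,2,3,4\}$. The colouring $w,a,b,c\mapsto4,1,2,3$ is valid at all four vertices whenever $\varphi(h)\ne4$. So any failure must again be forced by the host at $h$. In other words, ``fails for every colour choice at the attachment vertex'' is exactly what has to be proved; it is not a property these graphs have.

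Step 1 is also only asserted. It works when the subdivided edges form a perfect matching of $Q_k$, but not for arbitrary ``spread out'' choices: an unsubdivided ladder segment with $m\ge6$ rungs already has average degree $3-2/m\ge8/3$.

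\textbf{The missing idea} is a gadget that forbids more colours at a vertex than it adds to that vertex's degree. A $4$-cycle $x\,a\,b\,c\,x$ hanging at $x$, with $L(a)=L(b)=L(c)=S$ and $|S|=3$, does this:
\begin{itemize}
\item properness together with conflict-freeness at $b$ forces $\{\varphi(a),\varphi(b),\varphi(c)\}=S$;
\item properness at $x$ together with conflict-freeness at $a$ gives $\varphi(x)\notin S$.
\end{itemize}
So three colours are forbidden at $x$ at a cost of only $2$ in $d(x)$.

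The paper glues $t$ such $4$-cycles at one vertex $x$, with $S_i=\{1,2i,2i+1\}$ and $L(x)=\{1,\dots,2t+1\}$, so that $|L(x)|=d(x)+1$. This leaves $x$ with no colour. A cycle-rank count gives $\mad=8t/(3t+1)$: for a connected subgraph $H$ containing $r$ of the cycles, $|E(H)|=|V(H)|-1+r$ and $|V(H)|\ge3r+1$. Thus density and obstruction come from the same structure, and no host is needed. Hanging such $4$-cycles, with pairwise disjoint $S_i$, at your $u$ would make your Step 2 go through, but at that point the bouquet alone is already the example.
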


For planar graphs, Wang and Zhang~\cite{WZ} proved that every planar
graph of girth at least $12$ is proper conflict-free
$\degtwo$-choosable.
Kashima, \v{S}krekovski and Xu~\cite{KSXsparse} improved the girth
bound to $9$, and Qi, Song and Cao~\cite{QSC} further improved it
to $8$. \cref{thm:main} also improves the girth bound to \(8\).

\begin{corollary}\label[corollary]{cor:planar}
	Every planar graph of girth at least $8$ is proper
	conflict-free $\degtwo$-choosable.
\end{corollary}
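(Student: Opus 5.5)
The corollary should follow from \cref{thm:main} through the standard Euler's formula bound on the maximum average degree of planar graphs with large girth. Let $G$ be a planar graph of girth at least $8$; forests are included, with girth taken to be infinite. Since proper conflict-free $\degtwo$-choosability only constrains neighborhoods, it suffices to color each connected component separately and combine the colorings. So I may assume that $G$ is connected. Then $G\not\cong C_5$ automatically, because $C_5$ has girth $5<8$.

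The main step is to show that $\mad(G)<8/3$. Let $H$ be any subgraph of $G$ with at least one vertex. Then $H$ is planar, and every cycle of $H$ has length at least $8$. If $H$ is a forest, then $|E(H)|\le |V(H)|-1$, so $2|E(H)|/|V(H)|<2<8/3$. Otherwise, I may restrict to the components of $H$ that contain cycles, since forest components only lower the average degree. For a connected plane graph with $n$ vertices, $m$ edges, $f$ faces, containing a cycle and of girth at least $8$, every face boundary walk has length at least $8$. This gives $2m\ge 8f$. Euler's formula $n-m+f=2$ then yields $m\le \tfrac{4}{3}(n-2)$, so $2m/n\le \tfrac{8}{3}(1-2/n)<8/3$. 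Summing over components (each has $m_i\le \tfrac43 n_i - \tfrac83$, and tree components have $m_i\le n_i-1$) gives $2|E(H)|<\tfrac83|V(H)|$.

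Applying \cref{thm:main} to each component now proves the corollary. The only point needing care is the face-length bound $2m\ge 8f$. It requires each face boundary walk to have length at least $8$, which holds when $H$ contains a cycle: a face whose boundary contains a cycle has a boundary walk of length at least the girth, and bridges are traversed twice. A cleaner way to justify this is to first delete vertices of degree at most $1$ repeatedly. This only increases the average degree when it is at least $2$, and it leaves a graph of minimum degree at least $2$, where every face boundary contains a cycle. I expect no real obstacle here; this is routine.
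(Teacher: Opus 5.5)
Your proposal is correct and takes the same route as the paper. Both reduce to the connected case, note $G\not\cong C_5$, bound $\mad(G)<8/3$ via Euler's formula ($3|E(H)|\le 4|V(H)|-8$ for subgraphs containing a cycle, average degree below $2$ for trees), and then apply \cref{thm:main}; your face-walk justification and your handling of disconnected subgraphs $H$ just spell out details the paper leaves implicit.
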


\begin{proof}
	Assume $G$ is a planar graph of girth at least $8$.
    We may assume that $G$ is connected (otherwise, we consider each connected component of $G$).
	Let $H$ be a nonempty connected subgraph of $G$.
	If $H$ is a tree, then
	\[
	\frac{2|E(H)|}{|V(H)|}<2.
	\]
	Otherwise, since $H$ is planar and has girth at least $8$, Euler's
formula implies that
	\[
	3|E(H)|\le4|V(H)|-8,
	\]
	and hence $2|E(H)|/|V(H)|<8/3$.
	Thus,  $\mad(G)<8/3$.
	Since $G\not\cong C_5$, by
	\cref{thm:main}, $G$ is  proper
	conflict-free $\degtwo$-choosable.
\end{proof}

To prove \cref{thm:main} without any girth restriction,
we overcome two main difficulties.

First, deleting a vertex $v$ together with the vertices of its
incident threads may cause some vertices to become isolated in the
remaining graph.
To overcome this, we choose $v$ to have minimum degree
among the vertices whose required charge transfer exceeds
their initial charge.
We show that any resulting isolated vertex leads either
to a reducible configuration or to a theta graph that
can be handled separately.

Second, two degree-two neighbors $v_1$ and $v_2$ of $v$
may have a common neighbor $u$ of degree three.
In this case, the colors of $v_1$ and $v_2$ cannot always be chosen
independently: they must not both receive the color of the third
neighbor of $u$, so that some color appears exactly once in
$N_G(u)$.
We handle this additional restriction by \cref{lem:coupled}.
Together, these arguments allow the coloring to be extended to the
deleted vertices without imposing any girth condition.

\section{Preliminaries}\label{sec:prelim}
 A $k$-vertex has degree $k$, and a $k^+$-vertex has degree at least $k$. 
A \emph{$j$-thread} of $G$ is a path $u_1\cdots u_j$ whose vertices all have degree two in $G$. Note that if
a graph contains a $k$-thread, then it also contains an $\ell$-thread for any $1 \le  \ell \le  k-1$.
A $k$-thread is called  maximal if it is not contained in a $(j+1)$-thread. 
A vertex $x$ is \emph{incident with} a maximal $k$-thread
$u_1\cdots u_k$ if $xu_1\cdots u_k$ is a path; in this case,
$x$ is called a \emph{boundary vertex} of the thread.

For a coloring $\varphi$ of a graph $H$, define
\[
 \U_\varphi(x,H)=
 \bigl\{c:\bigl|\{y\in N_H(x):\varphi(y)=c\}\bigr|=1\bigr\}.
\]
We say $x$ has a singleton color if $ \U_\varphi(x,H) \ne \emptyset$.

\begin{proposition}[{\cite{KSXouter}}]\label{prop:Cycle}
	Every connected outerplanar graph other than $C_5$ is proper conflict-free $\degtwo$-choosable.
\end{proposition}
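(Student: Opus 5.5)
We would argue by a minimal counterexample. Let $G$ be a connected outerplanar graph other than $C_5$ with a list assignment $L$ satisfying $|L(v)|\ge d_G(v)+2$, such that $G$ has no proper conflict-free $L$-coloring, and choose $G$ with $|V(G)|+|E(G)|$ minimum. We first dispose of the small cases.
\begin{enumerate}
\item If $G$ is a path, we color greedily along the path. Each vertex has list size at least $3$ (or at least $4$ for an interior vertex), which is enough.
\item If $G$ is a cycle $C_n$ with $n\neq5$, we check directly that lists of size $4$ suffice. For $n\ne 5$ one can always arrange that each vertex sees two distinct colors on its neighbors. The standard argument fixes colors around the cycle and uses the slack of one extra color per vertex. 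The only obstruction is the parity/Petersen-type structure of $C_5$.
\end{enumerate}
From now on, any reduction must produce a connected graph other than $C_5$. Whenever deleting something would leave $C_5$, we treat that graph ($C_5$ plus a pendant path or a chord structure) by hand.

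\textbf{Pendant vertices.} Suppose $G$ has a $1$-vertex $v$ with neighbor $u$. Color $G-v$ by minimality, which is possible unless $G-v\cong C_5$. Then give $v$ a color from $L(v)$, where $|L(v)|\ge3$, avoiding $\varphi(u)$ and avoiding one singleton color $c\in\U_\varphi(u,G-v)$. This keeps $c$ a singleton at $u$, and $v$ automatically has the singleton color $\varphi(u)$. All other vertices are unaffected. So $\delta(G)\ge2$.

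\textbf{Outerplanar structure.} Since $G$ is $2$-edge-connected away from pendant parts and is not a cycle, the weak dual of a $2$-connected block is a tree. Take a leaf block, and in it a leaf face of the weak dual. This face is bounded by an edge $xy$ together with a path $xu_1\cdots u_ky$, where $k\ge1$ and every $u_i$ has degree $2$ in $G$. (If $G$ is not $2$-connected, a leaf block attached at a cut vertex gives the same picture, or a cycle hanging at one vertex, which is handled similarly.)

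\textbf{Reduction.} Delete $u_1,\dots,u_k$. The remaining graph is connected because $xy$ is an edge; if it is $C_5$, handle it separately. Color it by minimality. The crucial gain is that $\varphi(x)\ne\varphi(y)$, which removes the problem that a $2$-vertex with equally colored neighbors has no singleton. We then extend along the thread. Each $u_i$ has $4$ colors, and the constraints are:
\begin{enumerate}
\item properness with its neighbors;
\item $u_1$ must not destroy the chosen singleton at $x$, and $u_k$ must not destroy the chosen singleton at $y$;
\item every $u_i$ needs distinct colors on its two neighbors.
\end{enumerate}
For $k\ge2$ we color $u_1,\dots,u_k$ in order, leaving the last one or two vertices with enough freedom. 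We count forbidden colors: at most $3$ for each vertex, with one extra constraint near each end. If necessary we recolor $u_{k-1}$, using its spare choice.

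\textbf{Main obstacle: the case $k=1$.} Here $u_1$ must avoid four colors: $\varphi(x)$, $\varphi(y)$, a singleton of $x$ in $G-u_1$, and a singleton of $y$ in $G-u_1$. That is exactly $|L(u_1)|$ colors, so a direct count fails. We would exploit flexibility in two ways. First, if $\varphi(u_1)$ is a color not appearing in $N(x)\setminus\{u_1\}$, then $\varphi(u_1)$ itself becomes a singleton at $x$, so the constraint coming from $x$ disappears. Hence the constraint only matters when all remaining colors already occur in $N(x)$. Second, the singleton at $x$ can be chosen among several colors when $|\U_\varphi(x,G-u_1)|\ge2$. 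Note that $y$ lies in $N(x)$, so $\varphi(y)$ is one of the colors counted there. If all options still collide, we would recolor $x$ or $y$, or pass to a neighboring configuration: two adjacent leaf faces, or a $2$-vertex on a triangle adjacent to a vertex of degree at most $4$. These configurations are guaranteed by the standard structural lemma for outerplanar graphs with $\delta\ge2$, and in them the corresponding count has one unit of slack.
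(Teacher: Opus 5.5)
The paper gives no proof of this proposition; it is quoted from Kashima, \v{S}krekovski and Xu. So your outline has to stand on its own, and it has a genuine gap. With $G'=G-\{u_1,\dots,u_k\}$ colored and the singletons $\gamma_x,\gamma_y$ held fixed, the thread extension fails not only for $k=1$ but also for $k\in\{2,3\}$.

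Your count of ``at most $3$'' forbidden colors per vertex is off by one. The conflict-free condition at $u_i$ forces $\varphi(u_{i-1})\ne\varphi(u_{i+1})$, so you are really list-coloring the square of the path $xu_1\cdots u_ky$ with $x,y$ precolored.
\begin{itemize}
\item For $k=2$, $u_1$ must avoid $\varphi(x),\varphi(y),\gamma_x,\varphi(u_2)$, and $u_2$ must avoid $\varphi(x),\varphi(y),\gamma_y,\varphi(u_1)$. If $L(u_1)-\{\varphi(x),\varphi(y),\gamma_x\}=L(u_2)-\{\varphi(x),\varphi(y),\gamma_y\}=\{c\}$, no extension exists.
\item For $k=3$, the residual lists $L(u_1)-\{\varphi(x),\gamma_x\}$, $L(u_2)-\{\varphi(x),\varphi(y)\}$ and $L(u_3)-\{\varphi(y),\gamma_y\}$ can all equal the same $2$-set. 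But $u_1,u_2,u_3$ must be pairwise distinct, so again no extension exists.
\end{itemize}
There is no spare choice at $u_{k-1}$ to recolor. Your device of letting $\varphi(u_1)$ create a new singleton at $x$ does not help either. Every other color in $L(u_1)$ is excluded by properness or by the thread, and using $\gamma_x$ destroys the only singleton at $x$ when $\U_\varphi(x,G')=\{\gamma_x\}$ (likewise at $y$). Nothing in your setup rules this out. For $k\ge4$ your conclusion is right, but for a different reason than your count. The square of $u_1\cdots u_k$ is $2$-connected and neither complete nor an odd cycle, and each residual list is at least as large as the degree, so degree-choosability finishes it.

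The case $k=1$, which you correctly flag, is left open. The configuration $L(u_1)=\{\varphi(x),\varphi(y),\gamma_x,\gamma_y\}$ with $\U_\varphi(x,G')=\{\gamma_x\}$ and $\U_\varphi(y,G')=\{\gamma_y\}$ can occur. ``Recolor $x$ or $y$, or pass to a neighboring configuration'', backed by an unstated structural lemma and an unchecked ``unit of slack'', is not an argument.

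Every leaf face has some $k\ge1$, and $k\in\{1,2,3\}$ are exactly the cases where extending a single fixed coloring of $G'$ can fail. So what is missing is the actual content of the result. You need an explicit unavoidable set of configurations for outerplanar graphs with $\delta\ge2$, each shown reducible by an argument that uses more than one fixed coloring of the reduced graph. That could mean choosing among several singleton colors, recoloring $x$ or $y$, or a reduction that removes fewer vertices.

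Some lesser points also need repair:
\begin{itemize}
\item $\delta\ge2$ does not give $2$-edge-connectivity. You only need that leaf blocks are $2$-connected.
\item The cut vertex of a leaf block may lie inside a leaf face's path; in that case use another leaf face.
\item The reductions that produce $C_5$ still have to be carried out.
\end{itemize}
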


\begin{proposition}[{\cite{KSXouter}}]\label{prop:C5}
	Let $C=v_0v_1v_2v_3v_4v_0$ be a $5$-cycle.
	Suppose that $L$ is a list assignment of $C$ with $|L(v_i)|\ge4$ for every $i\in\{0,\ldots,4\}$..
	Then $C$ is not proper conflict-free $L$-colorable if and only if
	$L(v_0)=L(v_1)=L(v_2)=L(v_3)=L(v_4)$
	and $|L(v_0)|=4$.
\end{proposition}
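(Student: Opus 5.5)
The plan is to reduce the statement to a system-of-distinct-representatives problem and then apply Hall's theorem. The key observation is that on $C_5$ a proper conflict-free coloring is the same thing as an injective coloring.

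First, for each $i$ (indices mod $5$), $N_C(v_i)=\{v_{i-1},v_{i+1}\}$ has exactly two vertices. Some color appears exactly once in $N_C(v_i)$ if and only if $\varphi(v_{i-1})\ne\varphi(v_{i+1})$: if the two colors are equal, that color appears twice and no other color appears, and if they differ, both colors are singletons. So $\varphi$ is proper conflict-free exactly when vertices at distance $1$ receive distinct colors (properness) and vertices at distance $2$ receive distinct colors (the conflict-free condition). In $C_5$ every pair of distinct vertices is at distance $1$ or $2$. Hence a proper conflict-free $L$-coloring of $C$ is precisely a choice of pairwise distinct colors $\varphi(v_i)\in L(v_i)$, that is, a system of distinct representatives for the family $(L(v_0),\dots,L(v_4))$. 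Equivalently, it is an $L$-coloring of $K_5$.

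Second, apply Hall's theorem. An SDR fails to exist if and only if there is a nonempty $S\subseteq V(C)$ with $\bigl|\bigcup_{v\in S}L(v)\bigr|<|S|$. Since every list has size at least $4$, the union over any nonempty $S$ has size at least $4$. Therefore the only possible violator is $S=V(C)$, and it is a violator exactly when $\bigl|\bigcup_{i}L(v_i)\bigr|\le4$. Each $L(v_i)$ is a subset of this union and has size at least $4$, so this happens if and only if every $L(v_i)$ equals the union and has size exactly $4$. That is, $L(v_0)=\dots=L(v_4)$ and $|L(v_0)|=4$.

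For the converse direction one can also argue directly without invoking Hall. If all five lists equal the same $4$-set, then five pairwise distinct colors cannot be chosen from it, so no proper conflict-free $L$-coloring exists.

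I do not expect a real obstacle. The only point needing care is the equivalence in the first step, namely that the conflict-free condition at a $2$-vertex says exactly that its two neighbors receive distinct colors, together with the fact that $C_5$ has diameter $2$. Everything after that is a routine application of Hall's condition.
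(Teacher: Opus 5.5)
Your proof is correct and complete. The conflict-free condition at $v_i$ is exactly $\varphi(v_{i-1})\ne\varphi(v_{i+1})$. The five pairs $\{v_{i-1},v_{i+1}\}$ are precisely the five non-edges of $C_5$. Hence proper conflict-free $L$-colorings of $C$ are exactly injective choices from the lists, that is, $L$-colorings of $K_5$. Since every list has at least four colors, Hall's condition can fail only for $S=V(C)$. This happens precisely when all five lists equal a common $4$-set, which gives both directions of the equivalence.

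The paper does not prove this proposition itself; it is quoted from Kashima, \v{S}krekovski and Xu's outerplanar paper. So there is no in-paper argument to compare with. Your reduction to a system of distinct representatives supplies a short self-contained proof.

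Hall's theorem could also be replaced by a greedy degree-list-coloring argument on $K_5$. If some list has at least five colors, color that vertex last. If all lists have size exactly $4$, pick $i\ne j$ and $c\in L(v_i)\setminus L(v_j)$. Color $v_i$ with $c$ first, then color the three vertices other than $v_i,v_j$, and color $v_j$ last.
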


 Let $\Theta_{1,2^{\star a},3^{\star s},4^{\star t}}$ consist of two vertices joined by one edge and by $a+s+t$ internally disjoint paths.
 Here the length of a path means the number of its edges; among these paths, $a$ have length two, $s$ have length three, and $t$ have length four.

\begin{proposition}[{\cite{KSXsparse}}]\label[proposition]{prop:theta}
If $a,s,t$ are nonnegative integers with $a+s\ge1$ and $a+s+t\ge2$, then $\Theta_{1,2^{\star a},3^{\star s},4^{\star t}}$ is proper conflict-free $\degtwo$-choosable.
\end{proposition}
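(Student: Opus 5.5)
The statement to prove is \cref{prop:theta}, which is quoted from \cite{KSXsparse}. Still, I would prove it directly by an explicit coloring argument. Write $G=\Theta_{1,2^{\star a},3^{\star s},4^{\star t}}$ with poles $x,y$ joined by the edge $xy$. Then $d(x)=d(y)=1+a+s+t\ge 3$, and every internal vertex has degree two and a list of size at least $4$. Each pole has a list of size at least $3+a+s+t\ge 5$.

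\textbf{Step 1: color the poles and their neighbors.} First choose $\varphi(x)\ne\varphi(y)$ from their lists. The main design choice is to make the singleton colors at $x$ and $y$ come from the edge $xy$. We want $\varphi(y)$ to appear exactly once in $N(x)$, so no path-neighbor of $x$ may get $\varphi(y)$. Symmetrically, no path-neighbor of $y$ may get $\varphi(x)$.

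\textbf{Step 2: color each internal path independently.} Once both poles are colored, the path conditions decouple.
\begin{itemize}
\item On a path $xuy$ of length two, $u$ needs a color different from $\varphi(x)$ and $\varphi(y)$. Properness already forces this, so $u$ has at least $2$ options. The vertex $u$ itself is automatically fine, because its neighbors $x,y$ have distinct colors.
\item On a path $xu_1u_2y$ of length three, the constraints are these. First, $u_1\notin\{\varphi(x),\varphi(y)\}$ and $u_2\notin\{\varphi(x),\varphi(y)\}$. Second, $u_1\ne u_2$. Third, $u_1$ is conflict-free iff $\varphi(x)\ne\varphi(u_2)$, which already holds, and likewise for $u_2$. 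So we color $u_1$ from at least $2$ options and then $u_2$ from at least $1$ option.
\item On a path $xu_1u_2u_3y$ of length four, we additionally need $\varphi(u_1)\ne\varphi(u_3)$ for $u_2$, $\varphi(x)\ne\varphi(u_2)$ for $u_1$, and $\varphi(u_2)\ne\varphi(y)$ for $u_3$. Color $u_2$ first, avoiding $\varphi(x)$ and $\varphi(y)$; this leaves at least $2$ options. Then color $u_1$ avoiding $\varphi(x)$, $\varphi(y)$ and $\varphi(u_2)$; at least $1$ option remains. Then color $u_3$ avoiding $\varphi(x)$, $\varphi(y)$, $\varphi(u_2)$ and $\varphi(u_1)$. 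Here only $0$ options may remain, which is the obstacle.
\end{itemize}

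\textbf{Step 3: resolve the length-four case by coordinating choices.} I would handle the length-four paths by greedy recoloring with slack. The key flexibility is that the pair $(\varphi(x),\varphi(y))$ can be chosen from lists of size at least $5$.

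For a single path, the failure requires $L(u_3)=\{\varphi(x),\varphi(y),\varphi(u_2),\varphi(u_1)\}$ exactly. If this happens, I would recolor $u_1$ with another admissible option when one exists. Otherwise I would swap to a different choice of $u_2$; there are at least $2$ candidates, and a Hall-type counting argument shows that some choice succeeds unless the lists of $u_1,u_2,u_3$ together with $\{\varphi(x),\varphi(y)\}$ form a rigid structure. That rigid structure is where the hypothesis $a+s\ge1$ enters. I would use the short path (length $2$ or $3$) to pick $\varphi(x),\varphi(y)$ so that the pair is not contained in $L(u_3)\cap L(u_1)$ for the offending paths. Since each pole has at least $5$ colors and only a bounded number of forbidden pairs arise per length-four path, such a pair can be found.

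I expect this counting of bad pole-color pairs to be the main obstacle, especially when $t$ is large relative to $|L(x)|$, since the bad pairs could accumulate across many length-four paths. If that happens, I would fall back on making the singleton at $x$ come from a vertex on the short path instead of from $y$. That relaxes the requirement that $\varphi(y)$ be avoided on the length-four paths, and so removes the conflict.
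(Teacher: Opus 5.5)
Your overall scheme works: you make $\varphi(y)$ the singleton color at $x$ and $\varphi(x)$ the singleton color at $y$, and your treatment of paths of length two and three is correct. But the one nontrivial step, the existence of suitable pole colors, is left unproved, and the reason you give for it is wrong. The paper gives no proof of its own for this proposition; it quotes it from \cite{KSXsparse}.

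\textbf{The length-four obstruction.} Make it precise instead of appealing to recoloring and a ``rigid structure''. Put $P=\{\varphi(x),\varphi(y)\}$. Your constraints on a path $xu_1u_2u_3y$ say exactly that $u_1,u_2,u_3$ receive pairwise distinct colors from the sets $L(u_i)-P$, each of size at least $2$. By Hall's theorem this fails only if the three sets coincide as a single $2$-set. That happens exactly when $L(u_1)=L(u_2)=L(u_3)=M_p$ with $|M_p|=4$ and $P\subseteq M_p$. Which paths are dangerous is determined by the lists alone, so no dynamic recoloring is needed. You only need one pair $\varphi(x)\ne\varphi(y)$ with $P\not\subseteq M_p$ for every dangerous path $p$.

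\textbf{Existence of the pair.} Your justification (``each pole has at least $5$ colors and only a bounded number of forbidden pairs arise per path'') does not suffice. With $5$ colors per pole there may be only $20$ ordered pairs, while $t$ dangerous paths can forbid up to $12t$ of them. This is exactly the accumulation you worry about, and your fallback of moving the singleton at $x$ onto the short path is not worked out.

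The hypothesis $a+s\ge1$ does not enter by ``using the short path to pick'' the pole colors; it enters through list sizes. Write $n=a+s+t$. Then $|L(x)|,|L(y)|\ge n+3$ and $t\le n-1$. There are at least $(n+3)(n+2)$ ordered pairs $(c,c')\in L(x)\times L(y)$ with $c\ne c'$. At most $12t\le 12(n-1)$ of them lie inside some $M_p$. Since $(n+3)(n+2)-12(n-1)=n^2-7n+18>0$ for every $n$, a good pair always exists, and $t$ can never be large relative to $|L(x)|$. Inserting this count turns your scheme into a complete proof; without it, the argument is unfinished.
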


For the next two lemmas, let $X\subseteq V(G)$ and let $\varphi$
be a proper coloring of $G[X]$. Fix $v\in X$ and put
$A=N_G(v)\cap X$. Suppose that each $u\in N_G(v)-A$ has a set
$S(u)$ of at least two admissible colors such that $\varphi(w)\notin S(u)$
for every $w\in N_G(u)\cap X$.

The following two lemmas will be used to extend $\varphi$ to $N_G(v)-A$  so that $v$ has a singleton color.
\begin{lemma}\label[lemma]{lem:independent}
If $|A|\le3$ and $N_G(v)-A$ is a nonempty independent set,
then $\varphi$ can be extended to a proper coloring of
$G[X\cup N_G(v)]$ such that $\varphi(u)\in S(u)$ for every
$u\in N_G(v)-A$ and $v$ has a singleton color.
\end{lemma}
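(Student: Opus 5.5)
The plan is a direct case analysis on the multiset of colors that $\varphi$ already places on $A$. Write $B=N_G(v)-A=\{u_1,\dots,u_m\}$ with $m\ge1$. Since $B$ is independent, a choice $\varphi(u_i)\in S(u_i)$ for each $i$ automatically gives a proper coloring of $G[X\cup N_G(v)]$. Indeed, edges inside $B$ do not exist, and edges from $u_i$ to $X$ are handled by the hypothesis $\varphi(w)\notin S(u_i)$ for $w\in N_G(u_i)\cap X$. This hypothesis applies in particular to $w=v$, since $v\in X$ and $v$ is adjacent to $u_i$. So properness is free, and the only task is to make some color appear exactly once on $N_G(v)=A\cup B$. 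The tool throughout is that $|S(u_i)|\ge2$, so each $u_i$ can always avoid any one prescribed color.

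\emph{Case 1: some color $c$ appears exactly once on $A$.} I will choose $\varphi(u_i)\in S(u_i)-\{c\}$ for every $i$, which is possible since $|S(u_i)|\ge2$. Then $c$ still appears exactly once in $N_G(v)$, so $c\in\U_\varphi(v,G[X\cup N_G(v)])$.

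\emph{Case 2: no color appears exactly once on $A$.} Since $|A|\le3$, every color on $A$ then occurs at least twice, so $A$ carries at most one distinct color. This is exactly where the bound $|A|\le3$ is used: with four vertices the pattern $a,a,b,b$ would be possible. So either $A=\emptyset$, or all vertices of $A$ receive a single color $a$. I pick $c_1\in S(u_1)-\{a\}$, or any $c_1\in S(u_1)$ if $A=\emptyset$, and set $\varphi(u_1)=c_1$. For $i\ge2$ I choose $\varphi(u_i)\in S(u_i)-\{c_1\}$. Then $c_1$ occurs on $u_1$ only: it is absent from $A$ because $c_1\ne a$, and absent from $u_2,\dots,u_m$ by construction. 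Hence $c_1$ is a singleton color at $v$.

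There is no real obstacle here. The only points needing care are the observation that $\varphi(v)\notin S(u_i)$ (which gives properness on the edges $vu_i$), and the counting fact that at most three vertices without a singleton color use at most one color.
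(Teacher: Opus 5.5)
Your proof is correct and follows essentially the same route as the paper. The paper also splits on whether some color occurs exactly once on $A$, uses $|A|\le3$ to force at most one color on $A$ in the other case, and gets properness from the independence of $N_G(v)-A$ together with the hypothesis on $S(u)$. Your explicit remark that this hypothesis covers $w=v$ (since $v\in X$) is a point the paper leaves implicit.
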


\begin{proof}

If some color $c$ appears exactly once in $A$, then for each
vertex $u\in N_G(v)-A$, let $\varphi(u)\in S(u)-\{c\}$.
Thus, $v$ has a singleton color $c$.

Now we  assume that no color appears exactly once in $A$.
Since$|A|\le3$, either  $A=\emptyset$ or all vertices
in $A$ are colored  with the same color.
Choose $x\in N_G(v)-A$ and let  $\varphi(x)\in S(x)$ such that   $\varphi(x)$
not used in $A$.
For each vertex $u\in N_G(v)-(A\cup\{x\})$, let
$\varphi(u)\in S(u)-\{\varphi(x)\}$.
Then $x$ is the only neighbor of $v$ colored with $\varphi(x)$, so $v$ has
a singleton color $\varphi(x)$.

Since  $N_G(v)-A$ is independent and $S(u)$ contains no color
used in  $N_G(u)\cap X$ for each $u\in N_G(v)-A$, the extended coloring is proper. 
\end{proof}

\begin{lemma}\label[lemma]{lem:coupled}
	Suppose that $|A|\le 2$,  $N_G(v)-A$ is an independent set and 
$|N_G(v)-A|\ge2$.

Let $x$ and $y$ be two distinct vertices in $N_G(v)-A$.
For any color $\delta$,  $\varphi$ can be extended to a proper coloring of
$G[X\cup N_G(v)]$ such that $\varphi(u)\in S(u)$ for every
$u\in N_G(v)-A$, $(\varphi(x),\varphi(y))\ne (\delta,\delta)$ and $v$ has a singleton color.
\end{lemma}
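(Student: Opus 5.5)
The plan is a case analysis on the colors already present in $A$, with $|A|\le2$. In every case we first fix the colors of $x$ and $y$, and then color the remaining vertices of $N_G(v)-A$ greedily. Since $N_G(v)-A$ is independent and each $S(u)$ avoids the colors of $N_G(u)\cap X$, any choice $\varphi(u)\in S(u)$ keeps the coloring proper. The only things to control are the singleton color at $v$ and the pair condition $(\varphi(x),\varphi(y))\ne(\delta,\delta)$. Each list has at least two colors, so we can always avoid one prescribed color.

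\textbf{Case 1: some color $c$ appears exactly once in $A$.} This covers $|A|=1$, and $|A|=2$ with distinct colors.
\begin{enumerate}
\item Give every $u\in N_G(v)-(A\cup\{x\})$ a color in $S(u)-\{c\}$. Then $c$ stays a singleton at $v$.
\item For $x$, we need a color in $S(x)-\{c\}$ that also differs from $\delta$ whenever $\varphi(y)=\delta$. These are two forbidden colors, and $|S(x)|$ may be only $2$, so this can fail.
\item To avoid the failure, choose $y$ first from $S(y)-\{c\}$, preferring a color different from $\delta$. If that is impossible, then $S(y)=\{c,\delta\}$ and $\varphi(y)=\delta\ne c$.
\item In that situation, pick $\varphi(x)\in S(x)-\{c\}$. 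If this forces $\varphi(x)=\delta$, then $S(x)=\{c,\delta\}$ as well.
\item In that subcase switch strategy: set $\varphi(x)=c$ and $\varphi(y)=\delta$. The pair condition holds since $c\ne\delta$.
\item With $\varphi(x)=c$, color $c$ now appears at least twice in $N_G(v)$, so we must find a new singleton. Color all other vertices of $N_G(v)-(A\cup\{x,y\})$ with colors $\ne\delta$. Then $y$ is the unique neighbor of $v$ colored $\delta$, provided $\delta$ is not used in $A$.
\item If $\delta$ is used in $A$, pass to the other color of $A$. When $|A|=2$ with colors $c,c'$, this is handled by symmetry: use $c'$ as the singleton and avoid it everywhere. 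If $c'=\delta$, then $S(y)$ cannot contain $\delta$, because $y$ is not adjacent to $A$-vertices necessarily… so instead argue directly that the two options (keeping $c$ or $c'$ as the singleton) cannot both fail.
\end{enumerate}

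\textbf{Case 2: $A=\emptyset$, or $|A|=2$ with both vertices colored the same color $a$.}
\begin{enumerate}
\item Make $x$ the singleton. Choose $\varphi(x)\in S(x)-\{a\}$, and prefer $\varphi(x)\ne\delta$ when possible.
\item Give every other $u\in N_G(v)-A$ a color in $S(u)-\{\varphi(x)\}$.
\item If $\varphi(x)\ne\delta$, the pair condition holds automatically.
\item If $\varphi(x)=\delta$ was forced, then $S(y)-\{\varphi(x)\}$ already excludes $\delta$, so $\varphi(y)\ne\delta$ and the pair condition again holds.
\item Then $x$ is the unique neighbor of $v$ with color $\varphi(x)$.
\end{enumerate}
Case 2 is therefore essentially the argument of \cref{lem:independent} with an adjusted choice.

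\textbf{Main obstacle.} The hard part is Case 1 when the lists are tight, $S(x)=S(y)=\{c,\delta\}$, and $\delta$ also meets $A$. The approach there is to enumerate the four assignments of $(\varphi(x),\varphi(y))$ from $\{c,\delta\}^2$, excluding $(\delta,\delta)$. For each, check whether some color among $c$, $\delta$, or the second color of $A$ occurs exactly once in $N_G(v)$, while every other vertex $u$ avoids both $c$ and $\delta$ where $|S(u)|$ allows.

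The bound $|A|\le2$ is what should make this count work. With one $A$-color $a$, the assignment $(\varphi(x),\varphi(y))=(c,\delta)$, or its swap, leaves $a$ or $c$ as a singleton in all but a single degenerate pattern. That pattern must be ruled out by the other assignment choice.
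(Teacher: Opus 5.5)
Your proof takes the same route as the paper's, and steps 1--6 are correct. As written, though, it leaves one subcase open: $|A|=2$ with distinct colors $c,c'$, and $\delta=c'$. The paper's own split is the same as yours: it first handles $A=\emptyset$ or monochromatic $A$ via a singleton on $x$. For $|A|=2$ with distinct colors $c,e$ it splits on $\delta\in\{c,e\}$. When $\delta\notin\{c,e\}$, it splits on whether $S(x)=S(y)=\{c,\delta\}$, and in that tight case it sets $x\mapsto c$, $y\mapsto\delta$, as in your steps 5--6. (You place $|A|=1$ in Case 1 rather than with the monochromatic case, which also works.) In step 7 you first state the right fix (``use $c'$ as the singleton and avoid it everywhere''), then doubt it and fall back on an enumeration over $\{c,\delta\}^2$. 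That fallback rests on an unproved claim about ``a single degenerate pattern'' and is never carried out.

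The subcase you flag as the main obstacle is actually the easiest one. If $\delta$ is one of the two distinct colors on $A$, color every $u\in N_G(v)-A$ from $S(u)-\{\delta\}$, which is nonempty because $|S(u)|\ge 2$. Then $\delta$ occurs exactly once in $N_G(v)$, namely on its $A$-vertex. Neither $x$ nor $y$ receives $\delta$, so $(\varphi(x),\varphi(y))\ne(\delta,\delta)$ holds automatically. Whether $\delta\in S(y)$ does not matter, since $y$ is never asked to take $\delta$. For $|A|=1$ the obstacle cannot arise at all: in the tight subcase $\delta\ne c$ and $c$ is the only color on $A$, so your step 6 already finishes. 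The clean repair is to dispose of $\delta\in\{c,c'\}$ first, then run steps 3--6 knowing $\delta$ is absent from $A$. With that change your argument is complete and coincides with the paper's.
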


\begin{proof}
	Suppose first that $A=\emptyset$ or all vertices in $A$ are colored with  the same color.
Since $|S(x)|,|S(y)|\ge2$, let $\varphi(x)\in S(x)$ such that  $\varphi(x)$ is not used in $A$, and  let $\varphi(y)\in S(y)-\{\varphi(x)\}$.
Then let 
$\varphi(u)\in S(u)-\{\varphi(x)\}$ for each vertex
$u\in N_G(v)-(A\cup\{x,y\})$.
Thus,  $v$ has a singleton color $\varphi(x)$, and $x$ and $y$ receive
distinct colors.

Now assume that $|A|=2$ and the two vertices in $A$ are colored with  distinct
 colors, say $c$ and $e$.
If $\delta\in\{c,e\}$, then let $\varphi(u)\in S(u)-\{\delta\}$ for each vertex $u\in N_G(v)-A$.
Thus, $v$ has a singleton color $\delta$, and neither $x$ nor $y$ is colored with $\delta$.
	
Now suppose that $\delta\notin\{c,e\}$.
	If $S(x)-\{c\}\ne\{\delta\}$ or $S(y)-\{c\}\ne\{\delta\}$,
	then, by symmetry, we may assume
	that $S(x)-\{c\}\ne\{\delta\}$.
	Since $|S(x)|\ge2$, we have $S(x)-\{c,\delta\}\ne\emptyset.$
Let $\varphi(x)\in S(x)-\{c,\delta\}$, and let $\varphi(u)\in S(u)-\{c\}$ for  each vertex 
	$u\in N_G(v)-(A\cup\{x\})$.
	Thus, $v$ has a singleton color $c$, and $x$ is not colored with $\delta$.
	
If $S(x)-\{c\}=S(y)-\{c\}=\{\delta\}$, then   $S(x)=S(y)=\{c,\delta\}$.
Let $\varphi(x)=c$, $\varphi(y)=\delta$ and let $\varphi(u)\in S(u)-\{\delta\}$ for each vertex 
	$u\in N_G(v)-(A\cup\{x,y\})$.
Thus, 
	$y$ is the only neighbor of $v$ colored with $\delta$.
Hence, $v$ has a singleton color $\delta$, and $x$ and $y$ are not both colored with $\delta$.

Since  $N_G(v)-A$ is independent and $S(u)$ contains no color
used in  $N_G(u)\cap X$ for each $u\in N_G(v)-A$, the extended coloring is proper. 
\end{proof}

\section{Proof of \cref{thm:main}}\label{sec:main}

Suppose that \cref{thm:main} is false. Let $G$ be a counterexample such that $|V(G)|+|E(G)|$ is minimum.
Let $L$ be a list assignment of $G$ with
$|L(v)|\ge d_G(v)+2$ for every $v\in V(G)$, and suppose that
$G$ has no proper conflict-free $L$-coloring.
By \cref{prop:Cycle}, every cycle other than $C_5$ is proper
conflict-free $\degtwo$-choosable. Since $G\not\cong C_5$,
$G$ is not a cycle.

The following structural properties of $G$ follow from the local
reducibility arguments of Kashima, \v{S}krekovski and
Xu~\cite[Section~3.2]{KSXsparse}.
These arguments do not rely on
the maximum average degree of $G$. Therefore, these properties remain valid for our minimum
counterexample $G$ with $\mad(G)<8/3$.

Unless otherwise stated, all vertex degrees and threads are taken
with respect to $G$.
\begin{lemma}\label[lemma]{lem:structure}
The graph $G$ has the following properties.
\begin{enumerate}[label=\textup{(R\arabic*)},leftmargin=3.5em]
\item $\delta(G)\ge2$.

\item
Every maximal thread of $G$ is a $j$-thread for some
$j\in\{1,2,3\}$ and has two distinct $3^+$-vertices as boundary vertices.
If $j\in\{2,3\}$, then each boundary vertex of a maximal $j$-thread is a $4^+$-vertex.

\item Every $3^+$-vertex has a $3^+$-neighbor.
\item Every $k$-vertex, $k\ge4$, is incident with at most $k-3$ maximal $3$-threads.
\item A $k$-vertex, $k\ge4$, with exactly $k-1$ $2$-neighbors is incident with at most $k-4$ maximal $3$-threads.
\item If a $3$-vertex has two $2$-neighbors, then the other neighbor of each of these $2$-vertices has degree at least four.
\end{enumerate}
\end{lemma}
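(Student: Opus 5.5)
Each of (R1)--(R6) will be proved by reducibility on the minimum counterexample. In every case we delete a small set $D$ of vertices, or an edge, to get a smaller graph $G'$. If $G'$ is connected and not $C_5$, minimality gives a proper conflict-free $L$-coloring $\varphi$ of $G'$; otherwise we colour each component using \cref{prop:Cycle}, \cref{prop:C5} or \cref{prop:theta}, with care for $C_5$ components described below. We then extend $\varphi$ back to $D$. Removing $D$ lowers the degrees of its neighbours, so $|L(v)|\ge d_{G'}(v)+2$ still holds in $G'$ and the hypothesis is inherited. Also $\mad(G')\le\mad(G)$, which is why the argument never uses the bound $8/3$. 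When a vertex $u$ is recoloured, it must avoid three kinds of colours: the colours of its neighbours (properness), at most one colour for each neighbour $w$ so that $w$ keeps a singleton colour (we forbid a colour that would turn $w$'s unique colour into a repeated one, or make it appear twice), and whatever is needed to give $u$ itself a singleton colour. The extension therefore succeeds whenever $|L(u)|$ exceeds the number of forbidden colours. Since $|L(u)|\ge d(u)+2$ and $u$ has at most $d(u)$ neighbours, each with roughly one extra forbidden colour, there is slack only when some neighbours are uncoloured or have degree $2$. This is the engine of every case.

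The properties go in order. (R1): a $1$-vertex $v$ with neighbour $u$ is deleted. After colouring $G-v$, $v$ has at least $3$ colours and must avoid $\varphi(u)$ and one colour protecting $u$'s singleton; if $u$ becomes isolated then $G=K_2$, which is trivial. (R2): in a long thread $u_1\cdots u_j$ we delete the middle vertices and recolour them greedily from one end. An interior $2$-vertex has $4$ colours and must avoid two neighbour colours plus constraints from its neighbours, which leaves enough room. This bounds $j\le 3$. The same method shows that a thread whose two boundary vertices coincide, or a boundary vertex of degree $3$ on a $2$- or $3$-thread, is reducible: the boundary vertex $x$ of degree $3$ has $5$ colours and can be recoloured last with \cref{lem:independent} ensuring its singleton. (R3) and (R6): delete the $3$-vertex $v$ together with its $2$-neighbours and recolour those $2$-neighbours using the sets $S(u)$ of admissible colours. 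Each such $u$ has $4$ colours minus at most $2$ forbidden, so $|S(u)|\ge2$. We then apply \cref{lem:independent} at $v$, or \cref{lem:coupled} when two $2$-neighbours share a $3$-vertex neighbour and we must forbid the pair $(\delta,\delta)$. (R4) and (R5): a $k$-vertex $v$ with many incident $3$-threads is deleted together with all its incident threads. The $3$-threads then give far ends whose lists have slack, and we colour inward. The counting shows the extension fails only if $v$ has at least $k-2$ such threads (respectively at least $k-3$ threads together with $k-1$ $2$-neighbours), which is exactly what the claims exclude.

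The main obstacle is the degenerate global situations. Deleting $D$ may leave components that are $C_5$, or isolated vertices, or may leave $G'$ as a cycle or theta graph. The bounds in (R4)/(R5) sit exactly at the threshold where $v$ has almost no non-thread neighbour left. Here we first check whether $G$ itself is one of the graphs $\Theta_{1,2^{\star a},3^{\star s},4^{\star t}}$ or outerplanar; if so, \cref{prop:theta} or \cref{prop:Cycle} directly contradicts the choice of $G$. For a $C_5$ component we use \cref{prop:C5}: its lists restricted in $G'$ have size at least $4$, and at least one vertex had degree at least $3$ in $G$, so its list has size at least $5$ and the component is colourable anyway. As the paper notes, these are the local arguments of \cite[Section~3.2]{KSXsparse}. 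Our plan is to verify each against that source, carrying out the counting explicitly only for (R5) and (R6), where the coupling of \cref{lem:coupled} is needed.
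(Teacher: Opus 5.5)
Where your last paragraph falls back on \cite[Section~3.2]{KSXsparse}, you do exactly what the paper does. The paper proves nothing here beyond noting that those reductions only delete vertices, so the smaller graphs inherit $\mad<8/3$, and that $C_5$ components are handled by \cref{prop:C5}; your remarks on both points are correct. The reducibility sketches you offer in place of the citation, however, do not work as written.

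The central miscount is in (R3)/(R6). Take a $2$-neighbour $u$ of $v$ with far neighbour $w$. If $d_G(w)=3$, then $u$ loses only $\varphi(v)$ and $\varphi(w)$, since $w$ keeps a singleton whatever $u$ receives (as in \cref{lem:threeboundary}). If $d_G(w)\ge4$, then $u$ must also avoid $\gamma_w$. So only $|S(u)|\ge1$ is guaranteed, and \cref{lem:independent,lem:coupled} do not apply.

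For (R3) this defeats the extension from an arbitrary colouring of $G'=G-\{v,u_1,u_2,u_3\}$. Suppose the far ends $w_i$ are distinct $4^+$-vertices. Nothing in your argument excludes $L(v)=\{\varphi(w_1),\varphi(w_2),\varphi(w_3),\alpha,\beta\}$ and $L(u_i)=\{\varphi(w_i),\gamma_{w_i},\alpha,\beta\}$, where $\gamma_{w_i}$ is the unique singleton colour of $w_i$ in $G'$. Then $\varphi(v)\in\{\alpha,\beta\}$, since otherwise some $u_i$ sees one colour twice. Every $u_i$ is then forced onto the other colour of $\{\alpha,\beta\}$, and $v$ has no singleton. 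Moreover, (R3) concerns every $3^+$-vertex, and you never treat a $k$-vertex with $k\ge4$ all of whose neighbours have degree $2$. So (R3) needs an idea you do not supply.

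For (R6) the repair is easy: precolour a $2$-neighbour whose far end is a $4^+$-vertex and put it into $A$. The remaining neighbour then has $|S(u)|\ge2$.

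Your (R4)/(R5) sentence has the logic reversed. To show that a $k$-vertex is incident with at most $k-3$ maximal $3$-threads, you must show that a vertex with at least $k-2$ of them is reducible, i.e.\ that the extension \emph{succeeds} there. The statement ``the extension fails only if $v$ has at least $k-2$ such threads'' says nothing about that configuration, and no count is actually given.

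In (R2), colouring the interior of a $4$-thread $xu_1u_2u_3u_4y$ greedily from one end can get stuck at $u_4$. That vertex must avoid $\varphi(y)$, $\gamma_y$, $\varphi(u_3)$ and $\varphi(u_2)$ with only four colours. Some look-ahead is needed, e.g.\ reserving a pair at the far end as in \cref{lem:controlthree}. For the $3$-vertex clause of (R2), no recolouring of $x$ is needed: a $3$-vertex whose two remaining neighbours have distinct colours keeps a singleton whatever its third neighbour gets.

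As it stands, your sketches can be completed for (R1), (R2) and (R6). For (R3)--(R5) your proposal amounts to the same citation the paper uses, and the arguments you give in its place would not establish them.
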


\begin{lemma}\label[lemma]{lem:diamond}
	There are no distinct vertices $v,z,p,q\in V(G)$ with $d_G(v)\ge4$ such that
	\[
	N_G(z)=\{v,p,q\},\qquad N_G(p)=N_G(q)=\{v,z\}.
	\]
\end{lemma}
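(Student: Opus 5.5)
The plan is to reduce the configuration by deleting $z,p,q$, color the rest by minimality of $G$, and then extend greedily. Suppose to the contrary that such $v,z,p,q$ exist, and let $G'=G-\{z,p,q\}$. Since $z,p,q$ have all their neighbors in $\{v,z,p,q\}$, every path in $G$ between two vertices of $G'$ that passes through $\{z,p,q\}$ enters and leaves this set through $v$. Hence $G'$ is connected. Moreover $d_{G'}(v)=d_G(v)-2\ge2$, so $G'$ has at least three vertices, and $v$ is not isolated in $G'$. Also $\mad(G')\le\mad(G)<8/3$, and $|V(G')|+|E(G')|<|V(G)|+|E(G)|$.

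The only obstruction to applying minimality is $G'\cong C_5$. In that case $G$ is a $5$-cycle through $v$ together with the $4$-cycle $vpzqv$, and the two cycles share only the vertex $v$. Such a graph is outerplanar and connected, and it is not $C_5$. So \cref{prop:Cycle} gives a proper conflict-free $L$-coloring of $G$, which is a contradiction. Hence $G'\not\cong C_5$. Since $d_{G'}(u)\le d_G(u)$ for every $u\in V(G')$, the restriction of $L$ to $V(G')$ satisfies the list-size condition. Minimality of $G$ therefore yields a proper conflict-free $L$-coloring $\varphi$ of $G'$. Let $c\in\U_\varphi(v,G')$ be a singleton color of $v$ in $G'$.

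Next, extend $\varphi$ greedily to $z,p,q$ as follows.
\begin{enumerate}
\item Choose $\varphi(z)\in L(z)-\{\varphi(v)\}$; this is possible since $|L(z)|\ge5$.
\item Choose $\varphi(p)\in L(p)-\{\varphi(v),\varphi(z),c\}$ and $\varphi(q)\in L(q)-\{\varphi(v),\varphi(z),c\}$; this is possible since $|L(p)|,|L(q)|\ge4$.
\end{enumerate}
The coloring is proper, because $p$ and $q$ are not adjacent and every edge meeting $\{z,p,q\}$ has been checked.

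Finally, verify the conflict-free condition vertex by vertex.
\begin{itemize}
\item For $v$: the neighbors of $v$ in $G'$ keep their colors, and $\varphi(p),\varphi(q)\ne c$. So $c$ still appears exactly once in $N_G(v)$.
\item For $z$: we have $N_G(z)=\{v,p,q\}$ and $\varphi(p),\varphi(q)\ne\varphi(v)$. So $\varphi(v)$ is a singleton color of $z$.
\item For $p$: we have $N_G(p)=\{v,z\}$ and $\varphi(v)\ne\varphi(z)$, so both colors are singletons. The same holds for $q$.
\item For any other vertex: its neighborhood is unchanged, since only $v$ gained new neighbors.
\end{itemize}
Thus we obtain a proper conflict-free $L$-coloring of $G$, a contradiction.

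I expect no real obstacle here. The only point requiring care is the exceptional case $G'\cong C_5$, which is dispatched by outerplanarity via \cref{prop:Cycle}.
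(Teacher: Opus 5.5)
There is a genuine gap in your check at $v$. Since $v\in N_G(z)\cap N_G(p)\cap N_G(q)$, the vertex $v$ is adjacent to all three of $z,p,q$. So the configuration is the diamond $K_4-pq$ attached at $v$, not just the $4$-cycle $vpzqv$, and $d_{G'}(v)=d_G(v)-3\ge1$ rather than $d_G(v)-2$.

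Your verification for $v$ only considers the new neighbors $p$ and $q$. But $z$ is also a new neighbor of $v$, and your rule for $z$ only forbids $\varphi(v)$. So $\varphi(z)=c$ is allowed, and then $c$ appears twice in $N_G(v)$. This really can destroy the conflict-free property. Suppose $d_G(v)=4$, so $N_{G'}(v)=\{w\}$ with $\varphi(w)=c$. Your rules permit $\varphi(z)=c$, and they permit $p,q$ to receive a common color $d\notin\{\varphi(v),c\}$, since $pq\notin E(G)$. The colors on $N_G(v)$ are then $c,c,d,d$, and $v$ has no singleton color.

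The repair is one line: choose $\varphi(z)\in L(z)-\{\varphi(v),c\}$, which is possible since $|L(z)|\ge5$. With that change, all your remaining checks go through as written, and the argument becomes essentially the paper's proof. Your treatment of the exceptional case $G'\cong C_5$ also survives the correction. There $G$ is a $5$-cycle and a diamond sharing only $v$, which is still a connected outerplanar graph other than $C_5$, so \cref{prop:Cycle} applies. The paper instead applies \cref{prop:C5} to $G'$, using $|L(v)|\ge d_G(v)+2\ge d_{G'}(v)+3$.
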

\begin{proof}
	Let $H=G-\{z,p,q\}$.
	Since $G$ is connected and none of $z,p,q$ has a neighbor in  $V(G)-\{v,z,p,q\}$, $H$ is connected. 
	Then $H$ has a
	proper conflict-free $L$-coloring; if $H\cong C_5$, then it follows from
	 \Cref{prop:C5} and the fact that $H$ has a vertex $x$ with $|L(x)| \ge  d_G(x) + 2 \ge 
	d_H(x) + 3$, otherwise, it follows from the minimality of $G$.
	
Let  $\varphi$ be a proper conflict-free $L$-coloring of $H$.
	We extend $\varphi$ to an L-coloring  of $G$ by the following manner.	
	Since $d_H(v)=d_G(v)-3\ge1$, there exists a  color
	$\gamma_v\in\U_\varphi(v,H)$.
	Since $|L(z)|\ge5$ and 	$|L(p)|,|L(q)|\ge4$, let $\varphi(z)\in L(z)-\{\varphi(v),\gamma_v\}$, 
	$\varphi(p)\in L(p)-\{\varphi(v),\gamma_v,\varphi(z)\}$, and $\varphi(q)\in L(q)-\{\varphi(v),\gamma_v,\varphi(z)\}$.
    
    The resulting coloring is proper.
Since none of $z,p,q$ is colored with $\gamma_v$, $\gamma_v\in\U_\varphi(v,G)$.
Since $\varphi(v)\ne\varphi(z)$, we obtain that 
$\U_\varphi(p,G),\U_\varphi(q,G)\ne \emptyset$.
Since  none of $p,q$ is colored with $\varphi(v)$, $\varphi(v)\in\U_\varphi(z,G)$.
All other vertices retain their
singleton colors.
 Thus $\varphi$ is a proper conflict-free
$L$-coloring of $G$, a contradiction.
\end{proof}

For a $k$-vertex $x$ of $G$ with $k\ge4$, let $r(x)$ be the number of its $3^+$-neighbors in $G$. Let $a(x)$ and $b(x)$ count the incident maximal $1$-threads whose other boundary vertex is a $4^+$-vertex and a $3$-vertex, respectively. Let $s(x)$ and $t(x)$ count the incident maximal $2$-threads and maximal $3$-threads, respectively.
By \Cref{lem:structure} (R2), we have 
\[
 k=r(x)+a(x)+b(x)+s(x)+t(x).
\]
Define
\begin{align}
 W(x)&=a(x)+\tfrac32 b(x)+2s(x)+3t(x),\label{eq:W}\text{ and}\\
 E(x)&=W(x)-(3k-8)=8-3r(x)-2a(x)-\frac32 b(x)-s(x).\label{eq:excess}
\end{align}
We call $x$  \emph{overloaded} if $E(x)>0$.

\begin{lemma}\label[lemma]{lem:overloaded}
	 $G$ has an overloaded vertex.
\end{lemma}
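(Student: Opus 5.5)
The plan is a discharging argument: assuming no vertex is overloaded, I would show that $\sum_{v}(3d_G(v)-8)\ge 0$, which contradicts $\mad(G)<8/3$.

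\textbf{Initial charges.} Give every vertex $v$ the charge $\mu(v)=3d_G(v)-8$. Then
\[
\sum_{v\in V(G)}\mu(v)=6|E(G)|-8|V(G)|<0,
\]
because $2|E(G)|/|V(G)|\le\mad(G)<8/3$. By (R1) of \cref{lem:structure} every vertex has degree at least $2$. A $2$-vertex has charge $-2$, a $3$-vertex has charge $1$, and a $k$-vertex with $k\ge4$ has charge $3k-8$.

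\textbf{Discharging rules.} By (R2), every maximal thread has $j\in\{1,2,3\}$ vertices and two distinct $3^+$ boundary vertices, so it needs total charge $2j$. The boundary vertices pay for it as follows.
\begin{itemize}[label=$\circ$]
\item A maximal $1$-thread with two $4^+$ ends receives $1$ from each end.
\item A maximal $1$-thread with one $4^+$ end and one $3$-vertex end receives $3/2$ from the $4^+$ end and $1/2$ from the $3$-vertex.
\item A maximal $1$-thread with two $3$-vertex ends receives $1$ from each end.
\item A maximal $2$-thread receives $2$ from each end, and a maximal $3$-thread receives $3$ from each end. By (R2), both ends are $4^+$-vertices in these two cases.
\end{itemize}
Every thread receives exactly $2j$, so every $2$-vertex finishes with charge $0$. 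Total charge is preserved.

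\textbf{Checking $3$-vertices.} By (R3), a $3$-vertex $u$ has at most two $2$-neighbors, and each of them lies in a maximal $1$-thread.
\begin{itemize}[label=$\circ$]
\item If $u$ has two $2$-neighbors, then by (R6) the other ends of both threads are $4^+$-vertices. So $u$ gives $1/2+1/2=1$ and keeps charge $\ge0$.
\item If $u$ has at most one $2$-neighbor, it gives at most $1$ and again keeps charge $\ge0$.
\end{itemize}

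\textbf{Checking $4^+$-vertices.} A $k$-vertex $x$ with $k\ge4$ gives exactly
\[
a(x)+\tfrac32 b(x)+2s(x)+3t(x)=W(x).
\]
Its final charge is therefore $3k-8-W(x)=-E(x)$, using \eqref{eq:excess}. The second expression for $E(x)$ in \eqref{eq:excess} follows by substituting $k=r(x)+a(x)+b(x)+s(x)+t(x)$.

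\textbf{Conclusion.} If no vertex were overloaded, then $E(x)\le0$ for every $4^+$-vertex $x$. Every final charge would then be nonnegative, so the total would be $\ge0$. This contradicts the negative total computed above, so some vertex is overloaded.

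The only step needing care is the $3$-vertex check: it relies on (R6) to cap what a $3$-vertex gives at $1$, and on (R2) to guarantee that only $1$-threads touch $3$-vertices.
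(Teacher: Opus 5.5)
Your proposal is correct and follows essentially the same discharging argument as the paper: the same initial charge $3d_G(v)-8$, the same total payment $a(x)+\tfrac32 b(x)+2s(x)+3t(x)=W(x)$ from each $4^+$-vertex, and the same use of (R2), (R3) and (R6) to keep every $2$-vertex and $3$-vertex nonnegative. The only difference is cosmetic. The paper lets a $3$-vertex with exactly one $2$-neighbor always send $1$, which may leave that $2$-vertex with a surplus of $1/2$. You instead charge each maximal thread exactly $2j$.
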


\begin{proof}
Assume to the contrary that $G$ has no overloaded vertex.	
Thus,  $E(v)\le 0$ for each $4^+$-vertex $v$ of $G$.
	
	Let $\mu(v)=3d_G(v)-8$ be the initial charge of each vertex
	$v\in V(G)$.
	Since $\mad(G)<8/3$, we have
	\[
	\sum_{v\in V(G)}\mu(v)
	=6|E(G)|-8|V(G)|<0.
	\]
	
We redistribute the charge  according to the following discharging rules.

\begin{enumerate}[label=(\roman*)]
	\item  Let $u$ be a $3$-vertex of $G$.
	\begin{itemize}
		\item If $u$ has two $2$-neighbors, then $u$ sends charge $1/2$ to each of them.
		\item If  $u$ has one $2$-neighbor, then $u$ sends charge $1$ to  it.
	\end{itemize}

	\item Let $u$ be a $4^+$-vertex of $G$.
	\begin{itemize}
	\item $u$ sends charge $1$ to each vertex of an incident maximal $j$-thread for $j\in \{2,3\}$.
	
	\item 	$u$ sends charge $3/2$ to an adjacent $2$-vertex which has a $3$-neighbor.
	
	\item $u$ sends charge $1$ to an adjacent $2$-vertex which has a $4^+$-neighbor other than $u$.
	\end{itemize}
	
\end{enumerate}
	
	Let $\mu'(v)$ denote the final charge of  each vertex $v\in V(G)$. 
	
If $v$ is a $2$-vertex of $G$, then by  \Cref{lem:structure} (R2),  $v$ contained in  a maximal $j$-thread $p$ for some $j\in\{1,2,3\}$.
Let $x$ and $y$ be the two distinct boundary vertices of $p$.

If both $x$ and $y$ are $4^+$-vertices, then by (ii), each of $x$ and $y$ sends charge $1$ to $v$.
Hence, $\mu'(v)= \mu(v)+2=0$.
If both $x$ and $y$ are $3$-vertices, then by \Cref{lem:structure} (R2) and (R6), $j=1$ and 
both $x$ and $y$ have exactly one $2$-neighbor $v$.
Thus, by (i), each of $x$ and $y$ sends charge $1$ to $v$. Hence, $\mu'(v)= \mu(v)+2=0$.

Suppose that exactly one of $x$ and $y$ is a $4^+$-vertex and the other
is a $3$-vertex. By symmetry, we may assume that $x$ is a $4^+$-vertex
and $y$ is a $3$-vertex. By \Cref{lem:structure}(R2), we have $j=1$.
Then, by (i) and (ii), $x$ sends charge $3/2$ to $v$, while $y$ sends
charge at least $1/2$ to $v$. Hence,
$\mu'(v)\ge \mu(v)+2=0.$

If $v$ is a $3$-vertex of $G$, then by \Cref{lem:structure} (R3), $v$ has at most two $2$-neighbors.
Thus, by (i), we have $\mu'(v)\ge \mu(v)-1=0.$

If $v$ is a $4^+$-vertex of $G$, then  $v$ sends charge exactly $W(v)$ and receives
	no charge. Hence
	\[
	\mu'(v)=3d_G(v)-8-W(v)=-E(v)\ge0 .
	\]
Therefore, 
	\[
	0\le\sum_{v\in V(G)}\mu'(v)
	=\sum_{v\in V(G)}\mu(v)<0,
	\]
	a contradiction.
\end{proof}

By \Cref{lem:overloaded}, we choose an overloaded vertex $v$ of minimum degree among all overloaded
vertices, and put $k=d_G(v)\ge4$.
For brevity, write $r=r(v)$, $a=a(v)$, $b=b(v)$, $s=s(v)$ and $t=t(v)$.
Let $\mathcal P$ be the set of all maximal threads incident with $v$,
and let
$$
D_v=\{v\}\cup\bigcup_{p\in\mathcal P}V(p),\qquad H=G-D_v.
$$

\begin{lemma}\label[lemma]{lem:boundary}
$d_H(x)\ge1$ for each  vertex $x\in V(H)$.
\end{lemma}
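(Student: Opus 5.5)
Suppose, for a contradiction, that some $x\in V(H)$ has $d_H(x)=0$. Then every neighbor of $x$ lies in $D_v$. The plan is to pin down the structure around $x$ and reach a contradiction in each case, using \cref{lem:diamond}, \cref{prop:theta}, and the choice of $v$ as an overloaded vertex of minimum degree.

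\emph{Step 1: $x$ is a $3^+$-vertex whose neighbors are $v$ or thread ends.} The vertices of $D_v$ other than $v$ are $2$-vertices on maximal threads incident with $v$. A $2$-vertex adjacent to $v$, or adjacent to an end of such a thread, would itself lie on that maximal thread and hence in $D_v$. So $x\notin D_v$ forces $d_G(x)\ge3$. Each neighbor of $x$ is then either $v$ or the last vertex of a maximal thread starting at $v$, and $x$ is the other boundary vertex of that thread. By \Cref{lem:structure} (R3), $x$ has a $3^+$-neighbor. The only $3^+$-vertex in $D_v$ is $v$, so $xv\in E(G)$. Consequently $r(x)=1$, and the remaining $d_G(x)-1$ neighbors of $x$ lie on distinct maximal threads joining $x$ to $v$.

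\emph{Step 2: $d_G(x)=3$.} Here $x$ is a $3$-vertex with neighbor $v$ and two maximal threads to $v$. By (R2), these threads are $1$-threads, since a boundary $3$-vertex only allows $1$-threads. Writing them as $xpv$ and $xqv$ gives $N(x)=\{v,p,q\}$ and $N(p)=N(q)=\{v,x\}$ with $d_G(v)\ge4$. This is exactly the configuration excluded by \cref{lem:diamond}.

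\emph{Step 3: $d_G(x)\ge4$.} Every thread at $x$ ends at $v$, and $v$ has the $3^+$-neighbor $x$. Hence $d_G(x)-1\le k-1$, that is, $d_G(x)\le k$.

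If $d_G(x)=k$, all $k-1$ threads of $v$ end at $x$. Then $G$ is $\Theta_{1,2^{\star a},3^{\star s},4^{\star t}}$ with $a+s+t=k-1\ge3$. By (R4), $t\le k-3$, so $a+s\ge2\ge1$. \cref{prop:theta} then says $G$ is proper conflict-free $\degtwo$-choosable, contradicting the choice of $G$.

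If $d_G(x)<k$, we show that $x$ is overloaded, which contradicts the minimality of $k$. Because $x$ is adjacent to $v$, we have $r(v)\ge1$, and $E(v)>0$ gives
\[
2a(v)+\tfrac32 b(v)+s(v)<8-3r(v)\le5 .
\]
Every $1$-thread at $x$ ends at the $4^+$-vertex $v$, and every $2$-thread at $x$ also ends at $v$. So these threads are among those counted by $a(v)$ and $s(v)$, giving $2a(x)+s(x)\le 2a(v)+s(v)<5$. Also $b(x)=0$ and $r(x)=1$. Therefore
\[
E(x)=8-3-2a(x)-s(x)>0,
\]
so $x$ is overloaded with $d_G(x)<k$, a contradiction.

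\emph{Main obstacle.} I expect Step 3 to be the delicate point. It requires making sure that the threads at $x$ really are counted inside $v$'s quantities $a(v)$ and $s(v)$. It also requires showing that the theta case has $a+s\ge1$ and at least two paths, which is what \cref{prop:theta} needs. I would handle this via (R4) and the bound $k\ge4$, as above.
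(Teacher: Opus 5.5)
Your proof is correct and follows essentially the same route as the paper. You use (R2), (R3) and \cref{lem:diamond} to force $xv\in E(G)$ and $d_G(x)\ge4$; then the comparison $E(x)\ge E(v)>0$ and the minimality of $k$ force $d_G(x)=k$, which makes $G$ a theta graph excluded by \cref{prop:theta}. The differences are cosmetic: you split on $d_G(x)=k$ versus $d_G(x)<k$ before checking overloading, and you use (R4) to get $a+s\ge2$ where the paper uses (R5) to get $a+s\ge3$, and either bound meets the hypotheses of \cref{prop:theta}.
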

\begin{proof}
Suppose that $d_H(x)=0$ for some $x\in V(H)$. Then
$N_G(x)\subseteq D_v$. Since $x\notin D_v$, \Cref{lem:structure} (R2)
implies that $d_G(x)\ge3$.
If $xv\notin E(G)$, all neighbors
of $x$ have degree two, contrary to \Cref{lem:structure} (R3).
Hence $xv\in E(G)$. 
If $d_G(x)=3$, then (R2) implies that
$x$ is incident with exactly two maximal $1$-threads in
$\mathcal P$, contrary to \cref{lem:diamond}. Thus $d_G(x)\ge4$.

Since
every maximal thread incident with $x$ also belongs to $\mathcal P$,
we obtain that
$$
r(x)=1,\quad b(x)=0,\quad a(x)\le a,\quad s(x)\le s,\quad t(x)\le t,
$$
and hence
$$
d_G(x)=1+a(x)+s(x)+t(x),\qquad E(x)=5-2a(x)-s(x).
$$
It follows that
$$
E(x)-E(v)=3(r-1)+2(a-a(x))+\tfrac32 b+(s-s(x))\ge0.
$$
Thus $x$ is overloaded. Since $d_G(x)\le1+(k-r)\le k$, the
choice of $v$ forces $d_G(x)=k$. Therefore,
$$
0=k-d_G(x)=(r-1)+b+(a-a(x))+(s-s(x))+(t-t(x)).
$$
All terms are nonnegative, so $r=1$, $b=0$, and every thread
in $\mathcal P$ has $x$ as its other boundary vertex.
Since $G$ is connected, 
$$
G\cong\Theta_{1,2^{\star a},3^{\star s},4^{\star t}}.
$$
By \Cref{lem:structure}(R5), $t\le k-4$, and hence
$a+s=(k-1)-t\ge3$. By \cref{prop:theta}, $G$ has a proper
conflict-free $L$-coloring, a contradiction.
\end{proof}


Each connected component $H'$ of $H$ has a proper conflict-free $L$-coloring; if $H'\cong C_5$
it follows from
	\Cref{prop:C5} and the fact that $H'$ has a vertex $x$ with $|L(x)| \ge  d_G(x) + 2 \ge 
	d_{H'}(x) + 3$, otherwise, it follows from the minimality of $G$.
Thus, $H$ has a proper conflict-free $L$-coloring $\varphi$.
We shall extend $\varphi$ to a proper conflict-free $L$-coloring  of $G$.


For each vertex $x\in V(H)$ with a neighbor in $D_v$, fix a color
$$
\gamma_x\in\U_\varphi(x,H),
$$
which exists by \cref{lem:boundary}.
The same color $\gamma_x$ is used whenever $x$ occurs as the boundary
vertex of more than one thread or is also adjacent to $v$.

Let $R$ be the set of $3^+$-neighbors of $v$, and let $X$ be the set
of vertices of $H$ incident with a maximal $1$-thread in $\mathcal P$.
The admissible colors for $v$ form the set            
\begin{equation}\label{eq:A}
A=L(v)-\left(
\bigcup_{x\in R}\{\varphi(x),\gamma_x\}
\;\cup\;\{\varphi(x):x\in X\}\right).
\end{equation}
Since $|R|=r$ and $|X|\le a+b$,
\begin{equation}\label{eq:Asize}
|A|\ge k+2-2r-a-b=s+t+2-r.
\end{equation}
 In particular, for any $\alpha\in A$, $\varphi$ can be extended to a proper $L$-coloring of $G[V(H)\cup\{v\}]$ such that $\varphi(v)=\alpha$ and $\gamma_x\in \U_\varphi(x,G[V(H)\cup\{v\}])$ for
every $x\in R$. Also, $\varphi(v)\ne\varphi(x)$ for every $x\in X$.

\subsection{Choices on threads}\label{subsec:threads}

For each vertex $u\in N_G(v)-R$, we define an admissible color
set $F_u(\alpha)\subseteq L(u)$ depending on the color $\alpha\in A$
assigned to $v$.
By \Cref{lem:structure}(R2), $u$ belongs to a unique maximal
$m$-thread in $\mathcal P$, where $m\in\{1,2,3\}$.
Throughout the following four lemmas, let $vu_1\cdots u_mx$ be a path
of $G$ with $p=u_1\cdots u_m\in\mathcal P$, and let
$$
H'=G[V(H)\cup\{v\}\cup V(p)].
$$
Thus $v,x$ are the boundary vertices of $p$.

\begin{lemma}\label[lemma]{lem-4+}
Suppose that $m=1$ and $d_G(x)\ge4$.
For every $\alpha\in A$, let
$$
F_{u_1}(\alpha)=L(u_1)-\{\alpha,\varphi(x),\gamma_x\}.
$$
Then the following hold.
\begin{enumerate}[label=(\roman*)]
\item $|F_{u_1}(\alpha)|\ge1$.
\item For every $c\in F_{u_1}(\alpha)$, $\varphi$ extends to a proper
$L$-coloring of $H'$ such that
$$
\varphi(v)=\alpha,\qquad \varphi(u_1)=c,\qquad
\U_\varphi(u_1,H')\ne\emptyset,\qquad
\gamma_x\in\U_\varphi(x,H').
$$
\end{enumerate}
\end{lemma}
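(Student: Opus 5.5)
Part (i) is a counting statement and part (ii) a direct verification, so I would argue both straight from the definitions. For (i), the maximal $1$-thread $p=u_1$ gives $d_G(u_1)=2$, so $|L(u_1)|\ge 4$. Removing the three colors $\alpha,\varphi(x),\gamma_x$ leaves $|F_{u_1}(\alpha)|\ge 1$. Note that $x\in V(H)$: by \Cref{lem:structure}(R2) the boundary vertex $x$ is a $3^+$-vertex distinct from $v$, so it is not in $D_v$. Also $x$ has a neighbor $u_1\in D_v$, so $\gamma_x$ is defined and, by \cref{lem:boundary}, $\gamma_x\in\U_\varphi(x,H)$.

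For (ii), fix $c\in F_{u_1}(\alpha)$, set $\varphi(v)=\alpha$ and $\varphi(u_1)=c$, and check three things in $H'$.

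\emph{Properness.} The new edges of $H'$ are $vu_1$, $u_1x$, and the edges from $v$ to vertices of $H$, namely $v$ to $R$. There are no edges from $v$ to $X$, since vertices of $X$ meet $v$ only through threads. We have $c\ne\alpha$ and $c\ne\varphi(x)$ by the definition of $F_{u_1}(\alpha)$. Since $\alpha\in A$, we have $\alpha\ne\varphi(y)$ for every $y\in R$, so $v$ is proper against $H$; in particular $\alpha\neq\varphi(x)$ because $x\in X$.

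\emph{Singleton color at $u_1$.} Here $N_{H'}(u_1)=\{v,x\}$ and $\alpha\ne\varphi(x)$, so both colors are singletons and $\U_\varphi(u_1,H')\ne\emptyset$.

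\emph{The color $\gamma_x$ survives at $x$.} The neighbors of $x$ in $H'$ are its neighbors in $H$, together with $u_1$, together with $v$ if $xv\in E(G)$. We have $c\ne\gamma_x$ by the definition of $F_{u_1}(\alpha)$. If $xv\in E(G)$, then $x\in R$ (since $d_G(x)\ge4$), so $\alpha\ne\gamma_x$ because $\alpha\in A$. The other thread vertices incident with $x$ are not in $H'$, so they do not matter. Hence $\gamma_x$ still occurs exactly once in $N_{H'}(x)$, and $\gamma_x\in\U_\varphi(x,H')$.

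The remaining vertices of $R$ keep their $\gamma_y$ by the remark following \eqref{eq:Asize}, but that is not required here. The only delicate point is the case where $x$ is also adjacent to $v$, and membership $\alpha\in A$ handles it.
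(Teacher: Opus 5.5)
Your proof is correct and is essentially the paper's argument: $|L(u_1)|\ge 4$ gives (i), and (ii) follows from the definition of $F_{u_1}(\alpha)$ together with $\alpha\notin\{\varphi(x)\}\cup\{\varphi(y),\gamma_y : y\in R\}$, with the case $xv\in E(G)$ handled through $x\in R$. One aside is false, though unused: a vertex of $X$ can be adjacent to $v$ (this is exactly the triangle case $xv\in E(G)$ you treat later, where $x\in X\cap R$), so drop the claim that there are no edges from $v$ to $X$, since properness at $v$ only needs $\alpha\notin\varphi(R)$.
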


\begin{proof}
 Since $|L(u_1)|\ge d_G(u_1)+2=4$,  $|F_{u_1}(\alpha)|\ge1$.
For $c\in F_{u_1}(\alpha)$, let $\varphi(v)=\alpha$ and
$\varphi(u_1)=c$. 
 The coloring is proper by \eqref{eq:A}
and the definition of $F_{u_1}(\alpha)$.

Since $\alpha\ne\varphi(x)$ by quation~\eqref{eq:A}, the two neighbors of $u_1$ receive
distinct colors and hence $\U_\varphi(u_1,H')\ne\emptyset$. 
Since  $c\ne\gamma_x$ and if $xv\in E(G)$, then
also $\alpha\ne\gamma_x$ by \eqref{eq:A}.
Hence, 
$\gamma_x\in\U_\varphi(x,H')$.
\end{proof}

 \begin{lemma}\label[lemma]{lem:threeboundary}
Suppose that $m=1$ and $d_G(x)=3$.
For every maximal $1$-thread $u\in\mathcal P$ incident with $x$ and
every $\alpha\in A$, let
$$
F_u(\alpha)=L(u)-\{\alpha,\varphi(x)\}.
$$
Then the following hold.
\begin{enumerate}[label=(\roman*)]
\item $|F_u(\alpha)|\ge2$ for every such $u$.
\item If $u_1$ is the only maximal $1$-thread in $\mathcal P$ incident
with $x$, then, for every $c\in F_{u_1}(\alpha)$, $\varphi$ extends
to a proper $L$-coloring of $H'$ with
$$
\varphi(v)=\alpha,\qquad \varphi(u_1)=c,\qquad
\U_\varphi(u_1,H')\ne\emptyset,\qquad
\U_\varphi(x,H')\ne\emptyset.
$$
\item If another maximal $1$-thread $u_1'\in\mathcal P$ is incident
with $x$, then $xv\notin E(G)$.
For any $c\in F_{u_1}(\alpha)$ and $c'\in F_{u_1'}(\alpha)$ with
$(c,c')\ne(\gamma_x,\gamma_x)$, $\varphi$ extends to a proper
$L$-coloring of $G[V(H')\cup\{u_1'\}]$ with
$$
\varphi(v)=\alpha,\qquad \varphi(u_1)=c,\qquad \varphi(u_1')=c',
$$
which is conflict-free at $u_1$, $u_1'$ and $x$.
\end{enumerate}
\end{lemma}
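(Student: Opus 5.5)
The plan is to prove the three parts in order, each by a direct count and a check of which vertices' neighborhoods change when we pass from $H$ to $H'$ (or to $G[V(H')\cup\{u_1'\}]$).

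For (i), each such $u$ is a $2$-vertex, so $|L(u)|\ge4$. Removing the two colors $\alpha,\varphi(x)$ leaves $|F_u(\alpha)|\ge2$.

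For (ii), set $\varphi(v)=\alpha$ and $\varphi(u_1)=c\in F_{u_1}(\alpha)$, and check the following.
\begin{itemize}
\item Properness: $c\ne\alpha$ and $c\ne\varphi(x)$ by definition. Also $\alpha\ne\varphi(x)$, since $x\in X$ and \eqref{eq:A} removes $\varphi(x)$. The edges from $v$ to $R$ are handled by \eqref{eq:A} as already remarked. If $xv\in E(G)$, then $x\in R$, so again $\alpha\ne\varphi(x)$.
\item Vertex $u_1$: its two neighbors $v$ and $x$ have distinct colors, so $\U_\varphi(u_1,H')\ne\emptyset$.
\item Vertex $x$: it is a $3$-vertex, and its $H'$-neighborhood is $N_H(x)$ together with $u_1$, and possibly $v$ if $xv\in E(G)$.
\end{itemize}

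If $xv\notin E(G)$, then $d_H(x)=2$ and $d_{H'}(x)=3$. I will not try to protect $\gamma_x$ here, since $c$ may equal $\gamma_x$. Instead I argue directly: a multiset of three colors with no singleton must be constant. The two $H$-neighbors of $x$ already have a singleton color (by \cref{lem:boundary}, $d_H(x)\ge1$), so they are distinctly colored. Hence the three colors cannot all be equal, and $x$ has a singleton color in $H'$.

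If $xv\in E(G)$, then $d_H(x)=1$, so its single $H$-neighbor $w$ has $\varphi(w)=\gamma_x$. The neighbors of $x$ in $H'$ then carry the colors $\gamma_x,\alpha,c$, and $\alpha\ne\gamma_x$ by \eqref{eq:A}. Among three colors with two of them distinct, some color appears exactly once, so $x$ again has a singleton color.

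For (iii), first show $xv\notin E(G)$. Otherwise $N_G(x)$ would consist of $v$, $u_1$ and $u_1'$, and $N(u_1)=N(u_1')=\{v,x\}$. This is exactly the configuration forbidden by \cref{lem:diamond} (with $z=x$), since $d_G(v)\ge4$.

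It follows that $d_H(x)=1$, so the unique $H$-neighbor of $x$ has color $\gamma_x$. Set $\varphi(v)=\alpha$, $\varphi(u_1)=c$ and $\varphi(u_1')=c'$, and check the following.
\begin{itemize}
\item Properness: $u_1$ and $u_1'$ are non-adjacent $2$-vertices, $c,c'\notin\{\alpha,\varphi(x)\}$, and $\alpha\ne\varphi(x)$ by \eqref{eq:A}.
\item Vertices $u_1$ and $u_1'$: each sees $\alpha$ and $\varphi(x)$, which are distinct, so each has a singleton color.
\item Vertex $x$: it sees $\gamma_x,c,c'$. If this multiset had no singleton, all three colors would be equal, i.e.\ $c=c'=\gamma_x$, which is excluded by hypothesis.
\end{itemize}

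The main obstacle is the case of (ii) where $c=\gamma_x$ is allowed. There we cannot simply preserve the singleton color $\gamma_x$; instead we use the pigeonhole fact that three colors, not all equal, always contain a singleton color. This requires checking that $x$'s old neighbors, or $\alpha$ together with $\gamma_x$, already supply two distinct colors.
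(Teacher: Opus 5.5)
Your proof is correct and follows the paper's argument essentially step for step: the same count for (i), the same ``two distinct colors among the three neighbors of $x$'' argument in both subcases of (ii), and the same observation that $x$ sees $\gamma_x,c,c'$ in (iii). The one variation is in (iii), where you exclude $xv\in E(G)$ by citing \cref{lem:diamond} directly, whereas the paper cites \cref{lem:boundary} (whose proof falls back on \cref{lem:diamond} in exactly this case); your step $d_H(x)=1$ still needs \cref{lem:boundary} (or (R3)) to place the third neighbor of $x$ in $H$ rather than on a third thread.
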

 
 \begin{proof}
 (i): Since $|L(u)|\ge d_G(u)+2=4$,  $|F_{u}(\alpha)|\ge2$.
 	
 (ii):	By \Cref{lem:structure} (R2), $u_1$ is the only
thread in $\mathcal P$ incident with $x$.
    For $c\in F_{u_1}(\alpha)$, let $\varphi(v)=\alpha$ and $\varphi(u_1)=c$.
The extending coloring is proper and since $\alpha\ne\varphi(x)$ by quation~\eqref{eq:A}, $\U_\varphi(u_1,H')\ne\emptyset$. 

If $xv\notin E(G)$, the two vertices of $N_H(x)$ have distinct
colors because $\varphi$ is conflict-free at $x$ in $H$.
If $xv\in E(G)$, then $N_H(x)=\{v_x\}$ for some vertex $v_x$,
with $\varphi(v_x)=\gamma_x\ne\alpha$ by \eqref{eq:A}.
In either case, at least two  neighbors
of $x$ receive distinct colors. Hence, $\U_\varphi(x,H')\ne\emptyset$. 
 	
(iii): 
By \cref{lem:boundary}, the third neighbor $v_x$ of $x$ contained in $V(H)$. Hence $xv\notin E(G)$,
$N_H(x)=\{v_x\}$ and $\varphi(v_x)=\gamma_x$.
For  $c\in F_{u_1}(\alpha)$ and $c'\in F_{u_1'}(\alpha)$ with
$(c,c')\ne(\gamma_x,\gamma_x)$, let $\varphi(v)=\alpha$, $\varphi(u_1)=c$ and $\varphi(u_1)=c'$.

Since $u_1$ and $u_1'$ are nonadjacent and
$c,c'\notin\{\alpha,\varphi(x)\}$, the coloring is proper. 
Since $\alpha\ne\varphi(x)$ by quation~\eqref{eq:A}, $u_1$ and $u_1$ is conflict-free.
Since $(c,c')\ne(\gamma_x,\gamma_x)$, at least two neighbors of $x$ receive distinct colors. Hence $x$ is conflict-free.
 \end{proof}
 
 The next two lemmas construct control sets for maximal $2$- and $3$-threads, respectively. All two-element sets in these
constructions are fixed before $\alpha$ is chosen.

\begin{lemma}\label[lemma]{lem:controltwo}
Suppose that $m=2$. Choose a two-element set
\[
T_p\subseteq L(u_2)-\{\varphi(x),\gamma_x\},
\] and 
call $T_p$ the \emph{control set} of $p$.

For every $\alpha\in A$, let
\begin{equation}\label{eq:Ftwo}
F_{u_1}(\alpha)=
\begin{cases}
L(u_1)-\{\alpha,\varphi(x)\},&\alpha\notin T_p,\\
L(u_1)-(T_p\cup\{\varphi(x)\}),&\alpha\in T_p.
\end{cases}
\end{equation}
Then the following hold.
\begin{enumerate}[label=(\roman*)]
\item  $|F_{u_1}(\alpha)|\ge1$. Moreover,
    if $\alpha\notin T_p$, then
    $|F_{u_1}(\alpha)|\ge2$.
\item For every $c\in F_{u_1}(\alpha)$, $\varphi$ extends to a proper
$L$-coloring of $H'$ such that
$$
\varphi(v)=\alpha,\qquad \varphi(u_1)=c,\qquad
\U_\varphi(u_i,H')\ne\emptyset\ (i=1,2),\qquad
\gamma_x\in\U_\varphi(x,H').
$$
\end{enumerate}
\end{lemma}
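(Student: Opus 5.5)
Both parts follow directly from the list sizes $|L(u_1)|,|L(u_2)|\ge d_G(u_i)+2=4$ and the way the control set is defined. The main step in part~(ii) is to pick $\varphi(u_2)$ inside $T_p$ so that it avoids both $c$ and $\alpha$.

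\emph{The control set exists.} Since $|L(u_2)|\ge4$, the set $L(u_2)-\{\varphi(x),\gamma_x\}$ has at least two elements, so $T_p$ can be chosen.

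\emph{Part (i).} If $\alpha\notin T_p$, then $F_{u_1}(\alpha)$ is $L(u_1)$ with at most two colors removed, so $|F_{u_1}(\alpha)|\ge 2$. If $\alpha\in T_p$, then at most three colors are removed, so $|F_{u_1}(\alpha)|\ge1$.

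\emph{Part (ii): choice of $\varphi(u_2)$.} Fix $c\in F_{u_1}(\alpha)$ and set $\varphi(v)=\alpha$ and $\varphi(u_1)=c$. Then choose $\varphi(u_2)\in T_p-\{c,\alpha\}$. This set is nonempty in both cases of \eqref{eq:Ftwo}.
\begin{itemize}
\item If $\alpha\notin T_p$, then $T_p-\{c,\alpha\}=T_p-\{c\}$, which is nonempty because $|T_p|=2$.
\item If $\alpha\in T_p$, then $c\notin T_p$ by the definition of $F_{u_1}(\alpha)$. Hence the unique element of $T_p-\{\alpha\}$ differs from $c$.
\end{itemize}

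\emph{Part (ii): verification.}
\begin{itemize}
\item \emph{Properness at $v$.} We have $c\ne\alpha$ in both cases: in the first case $\alpha$ is removed explicitly, and in the second $\alpha\in T_p$ is removed. The color $\alpha$ also avoids $\varphi(y)$ for every neighbor $y$ of $v$ in $H$. Such a $y$ lies in $R$, because its degree is at least $3$, so this follows from \eqref{eq:A}.
\item \emph{Properness along the thread.} We have $\varphi(u_2)\ne c$ by construction. Also $\varphi(u_2)\ne\varphi(x)$ because $T_p\cap\{\varphi(x)\}=\emptyset$. All other edges lie in $H$.
\item \emph{Singleton color at $u_1$.} The neighbors of $u_1$ are $v$ and $u_2$, colored $\alpha\ne\varphi(u_2)$.
\item \emph{Singleton color at $u_2$.} The neighbors of $u_2$ are $u_1$ and $x$, colored $c\ne\varphi(x)$. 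This holds because $\varphi(x)$ was removed in both cases of \eqref{eq:Ftwo}.
\item \emph{The color $\gamma_x$ at $x$.} The new neighbor $u_2$ of $x$ has color in $T_p$, so its color is not $\gamma_x$. If moreover $xv\in E(G)$, then $x\in R$, and \eqref{eq:A} gives $\alpha\ne\gamma_x$. Hence no new neighbor of $x$ receives $\gamma_x$. Therefore $\gamma_x$ still appears exactly once in $N_{H'}(x)$, that is, $\gamma_x\in\U_\varphi(x,H')$.
\end{itemize}

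The only delicate point is the case $\alpha\in T_p$. Here the definition of $F_{u_1}(\alpha)$, which removes all of $T_p$, is exactly what guarantees that the remaining element of $T_p$ is available for $u_2$.
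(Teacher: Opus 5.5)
Your proof is correct and follows the paper's argument essentially step for step. You get the existence of $T_p$ from $|L(u_2)|\ge4$ and part (i) by counting removed colors. You then choose $\varphi(u_2)\in T_p-\{\alpha,c\}$, which is nonempty because $c\notin T_p$ when $\alpha\in T_p$. Finally you check properness, the singleton colors at $u_1,u_2$, and that no new neighbor of $x$ receives $\gamma_x$. Your extra remark that every neighbor of $v$ in $H$ lies in $R$, so that \eqref{eq:A} gives properness at $v$, simply makes explicit a step the paper leaves implicit.
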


\begin{proof}
Since $|L(u_2)|\ge4$, the  set $T_p$ exists.
Since $|L(u_1)|\ge4$ and $|T_p|=2$,  (i) holds.

 For $c\in F_{u_1}(\alpha)$, let $\varphi(v)=\alpha$ and $\varphi(u_1)=c$.
If $\alpha\notin T_p$,  $T_p-\{\alpha,c\}\ne \emptyset$. If $\alpha\in T_p$, then $c\notin T_p$ and hence $T_p-\{\alpha,c\}\ne \emptyset$.
Thus, let 
$
\varphi(u_2)\in T_p-\{\alpha,c\}.$

It is clearly that the extending coloring is proper.
Since $\varphi(v)\ne \varphi(u_2)$ and $\varphi(u_1)\ne\varphi(x)$,
$\U_\varphi(u_i,H')\ne \emptyset$ for $i\in \{1,2\}$.
Since $\varphi(u_2)\ne\gamma_x$, and $\alpha\ne\gamma_x$ whenever
$xv\in E(G)$, we have $\gamma_x\in\U_\varphi(x,H')$.
\end{proof}

\begin{lemma}\label[lemma]{lem:controlthree}
Suppose that $m=3$.
Choose two-element sets
$$
B_p\subseteq L(u_3)-\{\varphi(x),\gamma_x\},\qquad
T_p\subseteq L(u_1)-B_p,
$$
and call $T_p$ the \emph{control set} of $p$.
For every $\alpha\in A$, define
\begin{equation}\label{eq:Fthree}
F_{u_1}(\alpha)=T_p-\{\alpha\}.
\end{equation}
Then the following hold.
\begin{enumerate}[label=(\roman*)]
\item $|F_{u_1}(\alpha)|\ge1$. Moreover, if
$\alpha\notin T_p$, then   $|F_{u_1}(\alpha)|=2$.
\item For every $c\in F_{u_1}(\alpha)$, $\varphi$ extends to a proper
$L$-coloring of $H'$ such that
$$
\varphi(v)=\alpha,\qquad \varphi(u_1)=c,\qquad
\U_\varphi(u_i,H')\ne\emptyset\ (i=1,2,3),\qquad
\gamma_x\in\U_\varphi(x,H').
$$
\end{enumerate}
\end{lemma}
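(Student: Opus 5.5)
The plan follows the template of \cref{lem:controltwo}. First, the sets exist. Since $|L(u_3)|\ge d_G(u_3)+2=4$, removing $\varphi(x)$ and $\gamma_x$ leaves at least two colors, so $B_p$ can be chosen. Since $|L(u_1)|\ge4$ and $|B_p|=2$, we have $|L(u_1)-B_p|\ge2$, so $T_p$ can be chosen. Both sets are fixed before $\alpha$ is chosen. For (i), note that $F_{u_1}(\alpha)=T_p-\{\alpha\}$ with $|T_p|=2$. Hence $|F_{u_1}(\alpha)|\ge1$, and equality $|F_{u_1}(\alpha)|=2$ holds whenever $\alpha\notin T_p$.

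For (ii), fix $c\in F_{u_1}(\alpha)$ and set $\varphi(v)=\alpha$ and $\varphi(u_1)=c$. We then color $u_3$ and $u_2$ in turn, with four requirements in mind:
\begin{itemize}
\item properness along the path $v u_1 u_2 u_3 x$;
\item $\varphi(u_2)\ne\varphi(x)$, so that $u_3$ sees two distinct colors;
\item $\varphi(u_3)\ne\alpha$, so that $u_2$ sees two distinct colors;
\item $\varphi(u_3)\ne\gamma_x$, so that $\gamma_x$ survives at $x$.
\end{itemize}
For $u_1$ we need $\alpha\ne\varphi(u_2)$.

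\textbf{Choosing $u_3$.} Choose $\varphi(u_3)\in B_p-\{\alpha\}$. This set is nonempty because $|B_p|=2$. Its elements automatically avoid $\varphi(x)$ and $\gamma_x$, and they differ from $c$ because $c\in T_p\subseteq L(u_1)-B_p$. This disjointness of $B_p$ and $T_p$ is the point of the construction.

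\textbf{Choosing $u_2$.} Choose
\[
\varphi(u_2)\in L(u_2)-\{\alpha,\, c,\, \varphi(u_3),\, \varphi(x)\}.
\]
This is possible since $|L(u_2)|\ge4$ and at most four colors are removed, so we need the removed colors to collide or we need one more color. This counting is the main obstacle. If the four excluded colors are distinct, the list may be exhausted.

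The fix is to drop one requirement. Excluding $\varphi(x)$ at $u_2$ is only needed for $u_3$ to have a singleton color. But $u_3$ already has a singleton color whenever $\varphi(u_2)\ne\varphi(x)$, and $\varphi(u_2)=\varphi(x)$ would be harmless for $u_3$ only if no other neighbor helps. So instead we use the requirement at $u_1$ differently: $u_1$ needs $\varphi(u_2)\ne\alpha$, and $u_2$ needs $\varphi(u_3)\ne c$, which already holds. Thus the genuine constraints on $\varphi(u_2)$ are to be $\ne c$, $\ne\varphi(u_3)$ (properness), $\ne\alpha$ (so $u_1$ is conflict-free), and $\ne\varphi(x)$ (so $u_3$ is conflict-free).

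If this still leaves no color, we use the freedom in choosing $\varphi(u_3)$ when $\alpha\notin B_p$. There we pick whichever element of $B_p$ makes $L(u_2)$ minus the excluded colors nonempty; since $B_p$ has two options, at most one can make the excluded set fill all of $L(u_2)$. When $\alpha\in B_p$, the colors $\alpha$ and $\varphi(u_3)$ together only remove $B_p$, and $\alpha\ne\varphi(x)$ by \eqref{eq:A}. In that case we check collisions directly.

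Finally, verify the conclusions:
\begin{itemize}
\item properness edge by edge;
\item $\U_\varphi(u_i,H')\ne\emptyset$ for $i=1,2,3$, since each $u_i$ has two neighbors of different colors;
\item $\gamma_x\in\U_\varphi(x,H')$, since $\varphi(u_3)\ne\gamma_x$, and $\alpha\ne\gamma_x$ when $xv\in E(G)$ by \eqref{eq:A}.
\end{itemize}
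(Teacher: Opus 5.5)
There is a genuine gap, caused by a misread neighborhood. On the path $vu_1u_2u_3x$ the neighbors of $u_2$ are $u_1$ and $u_3$, not $v$. So your requirement ``$\varphi(u_3)\ne\alpha$, so that $u_2$ sees two distinct colors'' is spurious: $u_2$ only needs $\varphi(u_3)\ne c$, which $B_p\cap T_p=\emptyset$ already guarantees. Because you impose it and color $u_3$ before $u_2$, the vertex $u_2$ must avoid the four colors $\alpha,c,\varphi(u_3),\varphi(x)$, and its list may have exactly four elements.

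Your two-choice argument does rescue the case $\alpha\notin B_p$: if both elements of $B_p$ failed, $L(u_2)$ would lie in $\{\alpha,c,\varphi(x)\}$. But the case $\alpha\in B_p$, where you only say ``we check collisions directly'', genuinely fails. There $\varphi(u_3)$ is forced to be the other element $b$ of $B_p$. The colors $\alpha,b\in B_p$, $c\in T_p$ and $\varphi(x)\notin B_p$ are then pairwise distinct whenever $c\ne\varphi(x)$. For instance, take $L(u_1)=\{1,2,3,4\}$, $L(u_3)=\{1,2,5,6\}$, $\varphi(x)=5$, $\gamma_x=6$, $B_p=\{1,2\}$, $T_p=\{3,4\}$, $\alpha=1$, $c=3$ and $L(u_2)=\{1,2,3,5\}$. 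Your procedure leaves no color for $u_2$, yet the lemma must hold for every $c\in F_{u_1}(\alpha)$. (Also, $\alpha\ne\varphi(x)$ does not follow from \eqref{eq:A} in general, since the far boundary vertex $x$ of a $3$-thread need not lie in $R\cup X$. In this case it holds only because $\alpha\in B_p$.)

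The missing observation is that $u_3$ may reuse the color $\alpha$: it is not adjacent to $v$, and every color of $B_p$ already avoids $\varphi(x)$ and $\gamma_x$. The paper exploits this by reversing your order. First choose $\varphi(u_2)\in L(u_2)-\{\alpha,c,\varphi(x)\}$, which is nonempty since only three colors are removed. Then choose $\varphi(u_3)\in B_p-\{\varphi(u_2)\}$. Now:
\begin{itemize}
\item $u_1$ sees $\alpha\ne\varphi(u_2)$;
\item $u_2$ sees $c\ne\varphi(u_3)$ because $B_p\cap T_p=\emptyset$;
\item $u_3$ sees $\varphi(u_2)\ne\varphi(x)$;
\item $\gamma_x$ survives at $x$ because $\varphi(u_3)\in B_p$ and $\alpha\ne\gamma_x$ when $xv\in E(G)$.
\end{itemize}
No case distinction on $\alpha$ is needed. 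In your example this gives $\varphi(u_2)=2$ and $\varphi(u_3)=1=\alpha$.
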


\begin{proof}
Since $|L(u_1)|,|L(u_3)|\ge4$, $B_p$ and $T_p$ exist.
 Since $F_{u_1}(\alpha)=T_p-\{\alpha\}$ and $|T_P|=2$, (i) holds.

For $c\in F_{u_1}(\alpha)$, let $\varphi(v)=\alpha$ and $\varphi(u_1)=c$.
Since $|L(u_2)|\ge4$ and $|B_p|=2$, let 
$$\varphi(u_2)\in L(u_2)-\{\alpha,c,\varphi(x)\}, ~
\varphi(u_3)\in B_p-\{\varphi(u_2)\}.$$
It is clearly that the extending coloring of $H'$ is proper.
Since $T_p\cap B_p=\emptyset$, $\varphi(u_1)\ne \varphi(u_3)$ and hence $\U_\varphi(u_2,H')\ne\emptyset$.
Since 
$
\phi(v)\ne \phi(u_2)$ and $\phi(u_2)\ne\varphi(x)
$, $\U_\varphi(u_i,H')\ne\emptyset$ for $i\in \{1,3\}$.
Since  $\phi(u_3)\ne\gamma_x$, and $\alpha\ne\gamma_x$ if $xv\in E(G)$.
Thus $\gamma_x\in\U_\varphi(x,H')$.
\end{proof}

We now combine these local extensions.
Let
$$
K=G[V(H)\cup\{v\}\cup N_G(v)].
$$

By \Cref{lem:structure}(R2), each maximal thread in $\mathcal P$ has two distinct boundary vertices. Hence, distinct maximal threads in $\mathcal P$ have disjoint vertex sets, and each contains exactly one neighbor of $v$. Therefore,
$N_G(v)-R$
is an independent set.
For every $\alpha\in A$ and $u\in N_G(v)-R$, the set $F_u(\alpha)$
contains neither $\alpha$ nor any color used on $N_G(u)\cap V(H)$.
Thus, we extend $\varphi$ to a proper $L$-coloring of $K$ by coloring $v$ with an arbitrary color $\alpha\in A$ and each $u\in N_G(v)-R$ with an arbitrary color from $F_u(\alpha)$.

\begin{lemma}\label[lemma]{lem:extension}
Suppose that $\varphi$ has been extended to a proper $L$-coloring
of $K$ with $\varphi(v)=\alpha\in A$ and
$\varphi(u)\in F_u(\alpha)$ for every $u\in N_G(v)-R$.
Assume also that
\begin{equation}\label{eq:coupledcondition}
(\varphi(u),\varphi(u'))\ne(\gamma_x,\gamma_x)
\end{equation}
whenever a $3$-vertex $x\in V(H)$ is incident with two maximal
$1$-threads $u,u'\in\mathcal P$.
Then $\varphi$ extends to a proper $L$-coloring of $G$ in which
every vertex other than $v$ has a singleton color.
\end{lemma}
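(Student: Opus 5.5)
The plan is to treat each maximal thread $p\in\mathcal P$ separately, using exactly the extension constructed in the lemma that defines $F_u(\alpha)$ for that thread type. We then check that the separate extensions are compatible and that every vertex other than $v$ ends up with a singleton color.

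For each $p=u_1\cdots u_m\in\mathcal P$ with other boundary vertex $x$, we proceed by type.
\begin{itemize}
\item If $m=1$ and $d_G(x)\ge4$, nothing further needs to be colored; \cref{lem-4+}(ii) gives a singleton color at $u_1$ and $\gamma_x\in\U_\varphi(x,H')$.
\item If $m=1$ and $d_G(x)=3$ with only one thread of $\mathcal P$ at $x$, we use \cref{lem:threeboundary}(ii).
\item If $m=1$ and $x$ carries two such threads $u,u'$, we use \cref{lem:threeboundary}(iii); its hypothesis $(\varphi(u),\varphi(u'))\ne(\gamma_x,\gamma_x)$ is exactly condition \eqref{eq:coupledcondition}.
\item If $m=2$, we color $u_2$ from $T_p-\{\alpha,\varphi(u_1)\}$ as in the proof of \cref{lem:controltwo}.
\item If $m=3$, we color $u_2$ and then $u_3$ as in the proof of \cref{lem:controlthree}.
\end{itemize}
These choices concern disjoint vertex sets: by (R2) distinct threads in $\mathcal P$ are vertex-disjoint, and the interior vertices $u_2,u_3$ have no neighbors outside their own thread except $x$. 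So the combined coloring is a proper $L$-coloring of $G$. Properness needs checking only on edges inside a thread and on the edge $u_mx$, and these are handled by the individual lemmas.

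Next we verify the singleton colors vertex by vertex. Each thread vertex $u_i$ has all its neighbors inside $V(H')$ for its own thread, so $\U_\varphi(u_i,G)=\U_\varphi(u_i,H')\ne\emptyset$ by the lemmas. For $x\in R$, adjacent to $v$ but on no thread, the choice of $A$ in \eqref{eq:A} gives $\alpha\ne\gamma_x$, so $\gamma_x$ remains a singleton. For a vertex $x\in V(H)$ on no thread and not adjacent to $v$, its neighborhood is unchanged, so its singleton color from $H$ survives.

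The main care is needed for a boundary vertex $x\in V(H)$ that is incident with several threads of $\mathcal P$, and possibly also adjacent to $v$. The key observation is that in every case except the $3$-vertex case, each newly colored neighbor of $x$ (namely $u_m$, or $v$ if $xv\in E(G)$) avoids $\gamma_x$. This holds because $\gamma_x$ was excluded from $F_{u_1}$ when $m=1$, from $T_p$ when $m=2$, from $B_p$ when $m=3$, and from $A$ when $x\in R$. Because the same $\gamma_x$ is fixed for all threads at $x$, these exclusions combine: $\gamma_x$ appears exactly once in $N_G(x)$, at its unique $H$-neighbor.

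If $d_G(x)=3$, then (R2) shows that $x$ is incident only with $1$-threads, and \cref{lem:boundary} gives $d_H(x)\ge1$. So either $x$ has exactly one thread in $\mathcal P$, handled by \cref{lem:threeboundary}(ii), where the argument there accounts for a possible edge $xv$; or $x$ has two threads, handled by \cref{lem:threeboundary}(iii) and \eqref{eq:coupledcondition}. Nothing more needs to be checked.
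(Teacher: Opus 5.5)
Your proposal is correct and follows essentially the same route as the paper. You apply \cref{lem-4+,lem:threeboundary,lem:controltwo,lem:controlthree} thread by thread, get properness from the vertex-disjointness of the threads, and then check singleton colors separately at thread vertices, at untouched vertices of $H$, at neighbors of $v$ lying on no thread (via \eqref{eq:A}), at $4^+$ boundary vertices (every new neighbor avoids $\gamma_x$), and at $3$-vertex boundaries (via \cref{lem:threeboundary} and \eqref{eq:coupledcondition}). Your remarks that a $3$-vertex boundary carries at most two threads of $\mathcal P$ (by \cref{lem:boundary}), and that interior thread vertices have no neighbors outside their own thread except $x$, simply make explicit what the paper leaves implicit.
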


\begin{proof}
We apply
\cref{lem-4+,lem:threeboundary,lem:controltwo,lem:controlthree}
to the threads in $\mathcal P$ to complete the coloring of $G$.
Since there are no edges between distinct threads, the resulting
coloring is proper. These lemmas also ensure that every vertex
of $D_v-\{v\}$ has a singleton color.

It is clearly that 
all vertices of $H$ with no neighbor in $D_v$ retain their
singleton colors. 
Now consider a vertex $x\in V(H)$ with a neighbor in $D_v$.

If $x$ is not incident with any thread in $\mathcal{P}$, then $xv\in E(G)$.
By \eqref{eq:A}, $v$ avoids $\gamma_x$ and hence $\gamma_x$ remains a singleton color at $x$.

If $d_G(x)=3$ and $x$ is incident with a thread in $\mathcal P$,
then every such thread is a maximal $1$-thread by
\Cref{lem:structure} (R2).
Thus, $x$ has a singleton color by \cref{lem:threeboundary}
and \eqref{eq:coupledcondition}.

If  $d_G(x)\ge4$ and  $x$ is incident with a thread in $\mathcal P$, then every new neighbor of $x$ avoids $\gamma_x$ by
\cref{lem-4+,lem:controltwo,lem:controlthree} and \eqref{eq:A}.
Hence $\gamma_x$ remains a singleton color at $x$.
\end{proof}

As shown above, we obtain a proper $L$-coloring of $K$ by coloring $v$ with any $\alpha\in A$ and each $u\in N_G(v)-R$ with any color from $F_u(\alpha)$. If $v$ has a singleton color in $K$ and
\eqref{eq:coupledcondition} holds, then, by \cref{lem:extension},
this coloring extends to a proper conflict-free $L$-coloring of
$G$, since all neighbors of $v$ already lie in $K$.
It therefore remains to choose these colors so that both
conditions are satisfied.

\subsection{Choosing a singleton color at $v$}
\label{subsec:singleton}

We determine which neighbors of $v$ may have only one
admissible color once $\alpha$ is fixed.
By the preceding lemmas, every neighbor $u$ of $v$ has at least two
admissible colors, except in the following cases:
\begin{itemize}
	\item $u$ is a direct $3^+$-neighbor of $v$, whose color is
	already fixed by $\varphi$;
	
	\item $u$ is the vertex of a maximal $1$-thread whose other
	boundary vertex is a $4^+$-vertex; or
	
	\item $u$ is contained in a maximal $2$- or $3$-thread $p$ and
	$\alpha\in T_p$.
\end{itemize}

By \Cref{lem:structure} (R3), we have $r\ge1$.
Since $E(v)>0$, by \eqref{eq:excess},  we obtain that $r\in\{1,2\}$.

\medskip
\noindent\textbf{Case 1.} $r=1$.

By \Cref{lem:structure} (R5), $t\le k-4$, and hence
$$
 a+b+s\ge3,\qquad 2a+\tfrac32 b+s<5.
$$
It follows that $a\le1$ and $b\le3$; moreover, $b\ge2$ implies $a=0$.

By \eqref{eq:Asize}, $|A|\ge s+t+1$.  Since there are $s+t$ two-element
control sets, the pigeonhole principle implies that some
$\alpha\in A$ belongs to at most one control set.
We color $v$ with $\alpha$ and color each vertex of $N_G(v)-R$ whose
admissible color set has size one.

Suppose first that no $3$-vertex is incident with two maximal
$1$-threads in $\mathcal{P}$.
Since $r=1$, $a\le1$, and $\alpha$ belongs to at most one control set,
at most three neighbors of $v$ are colored.
Every other neighbor of $v$ has at least two admissible colors
with no additional restriction.
Since $k\ge4$, at least one  neighbor of $v$ remains uncolored.
By \Cref{lem:independent}, $\varphi$ extends to a proper
$L$-coloring of $K$ in which $v$ has a singleton color.

Suppose now that a $3$-vertex $x$ is incident with two maximal
$1$-threads $u,u'\in\mathcal P$.
Then $b\ge2$ and hence $a=0$. 
Since $b\le3$, $x$ is the unique such
$3$-vertex. 
Thus, at most two neighbors of $v$ are now colored, and every
remaining neighbor has at least two admissible colors.
In particular, $u,u'$ remain uncolored.
By \cref{lem:coupled}, with $\delta=\gamma_x$, $\varphi$ extends
to a proper $L$-coloring of $K$ in which $v$ has a singleton color
and \eqref{eq:coupledcondition} holds.

\medskip
\noindent\textbf{Case 2.} $r=2$.

By \Cref{lem:structure} (R4), $t\le k-3$, and hence
$a+b+s=k-2-t\ge1$.
Since $E(v)>0$, we have
$2a+\frac32 b+s<2.$
Therefore,
\begin{equation}\label{eq:patterns}
	(a,b,s)=(0,1,0)
	\quad\text{or}\quad
	(a,b,s)=(0,0,1).
\end{equation}
In particular, no $3$-vertex is incident with two maximal
$1$-threads in $\mathcal P$. By \eqref{eq:Asize},
 $|A|\ge s+t$.
 
 Suppose first that some $\alpha\in A$ belongs to at most one
control set. Color $v$ with $\alpha$, and color each vertex of
$N_G(v)-R$ whose admissible set has size one.
Since $a=0$ and $r=2$, at most three neighbors of $v$ are now
colored and  every remaining neighbor has at least two admissible
colors. Since $k\ge4$,  at least one neighbor of $v$ remains uncolored.
By \Cref{lem:independent}, $\varphi$ extends to a proper
$L$-coloring of $K$ in which $v$ has a singleton color.

Now assume that every $\alpha\in A$ belongs to at least two
control sets. Since $|A|\ge s+t$, we have
$$
2(s+t)\le2|A|\le\sum_p|T_p\cap A|
\le\sum_p|T_p|=2(s+t),
$$
where both sums run over all maximal $2$- and $3$-threads
$p\in\mathcal P$.
Hence $|A|=s+t$, every $T_p$ is contained in $A$, and every color
of $A$ belongs to exactly two control sets.

Let $z_1$ and $z_2$ be the two $3^+$-neighbors of $v$.
Since $|A|=s+t$, equality holds in \eqref{eq:Asize}.
It follows from the definition of $A$ that
\[
\varphi(z_1),\varphi(z_2)\notin A
\qquad\text{and}\qquad
\varphi(z_1)\ne\varphi(z_2).
\]

For each maximal thread $p\in\mathcal P$, let $u_p$ be the vertex of
$p$ adjacent to $v$.

\medskip
\noindent\textbf{Subcase 2.1.} $(a,b,s)=(0,1,0)$.

Choose any $\alpha\in A$ and color $v$ with $\alpha$.
Let $u$ be the vertex of the unique maximal $1$-thread in
$\mathcal P$. Since $|F_u(\alpha)|\ge2$, assign to $u$ a color
in $F_u(\alpha)-\{\varphi(z_1)\}$.
For each maximal $3$-thread $p\in\mathcal P$, assign to $u_p$
any color in $F_{u_p}(\alpha)$.
By \eqref{eq:Fthree},
$$
F_{u_p}(\alpha)=T_p-\{\alpha\}\subseteq A,
$$
so none of these vertices is colored with $\varphi(z_1)$.
Thus, $\varphi$ extends to a proper
$L$-coloring of $K$ in which $v$ has a singleton color $\varphi(z_1)$.

\medskip
\noindent\textbf{Subcase 2.2.} $(a,b,s)=(0,0,1)$.

Let $p_0\in \mathcal{P}$ be the unique maximal $2$-thread and let $T_{p_0}$ be
its control set.
Since $k\ge4$, $t=k-3\ge1$.

If $t\ge2$, then $|A|=s+t\ge3$. Color $v$ with a color
$\alpha\in A-T_{p_0}$.
By \cref{lem:controltwo},
$|F_{u_{p_0}}(\alpha)|\ge2$, so we can assign to $u_{p_0}$ a color
in $F_{u_{p_0}}(\alpha)-\{\varphi(z_1)\}$.
For each maximal $3$-thread $p\in\mathcal P$, assign to $u_p$
any color in $F_{u_p}(\alpha)$. By \eqref{eq:Fthree},
$F_{u_p}(\alpha)\subseteq T_p\subseteq A$, so none of these
vertices is colored with $\varphi(z_1)$.
Therefore, $v$ has a singleton color $\varphi(z_1)$.

Now assume that $t=1$. Then $|A|=s+t=2$; write
$A=\{\alpha,\beta\}$. 
Hence, $T=A$ for each control set $T$.
We color $v$ with $\alpha$.
Let $p_3$ be the unique maximal
$3$-thread. By \eqref{eq:Fthree},
$F_{u_{p_3}}(\alpha)=\{\beta\}$, so we color $u_{p_3}$ with $\beta$.
Since $\alpha\in T_{p_0}=A$, by \eqref{eq:Ftwo}, we obtain that
$$
F_{u_{p_0}}(\alpha)\cap A=\emptyset.
$$
We assign to $u_{p_0}$ any color in $F_{u_{p_0}}(\alpha)$.
Since $\varphi(z_1),\varphi(z_2)\notin A$, $v$ has a singleton color  $\beta$.

In all cases, $v$ has a singleton color and
\eqref{eq:coupledcondition} is satisfied. Hence, by
\cref{lem:extension}, $\varphi$ extends to a proper conflict-free
$L$-coloring of $G$, a contradiction.

\section{Proof of \cref{thm:sharp}}
\label{sec:sharp}
Fix $\varepsilon>0$. Let $t$ be a positive integer such that
$8/(9t+3)<\varepsilon$.
Let $C_i=x_i a_i b_i c_i x_i$, $1\le i\le t$, be pairwise
vertex-disjoint $4$-cycles. Let $B_t$ be obtained from their
disjoint union by identifying $x_1,\ldots,x_t$ into a single
vertex $x$.
Then $B_t\ncong C_5$ is a connected  graph and the average degree of $B_t$ is $8t/(3t+1)$.

Let $H$ be a nonempty connected subgraph of $B_t$.
If $H$ is acyclic, then its average degree is less than $2$.
Now  we assume that $H$ contains at least one cycle.
By the construction of $B_t$, we obtain that every cycle of $H$ is one of $C_1,\ldots,C_t$.
Assume $H$ contains 
$r$ cycles, where $1\le r\le t$.
Thus $|V(H)|\ge3r+1$.
Since deleting one edge from each of the $r$ cycles leaves a tree
on $V(H)$, we have $|E(H)|=|V(H)|-1+r$. Hence
\[
\frac{2|E(H)|}{|V(H)|}
=2+\frac{2r-2}{|V(H)|}
\le 2+\frac{2r-2}{3r+1}
=\frac{8r}{3r+1}
\le\frac{8t}{3t+1}.
\]
Since the same bound holds for disconnected subgraphs of $B_t$ by considering
their components,
 we obtain that $\mad(B_t)=8t/(3t+1)$.
Therefore, 
\[
0<\frac83-\mad(B_t)=\frac{8}{9t+3}<\varepsilon.
\]

Next, we will show that $B_t$ is not proper conflict-free
$(\mathrm{degree}+1)$-choosable.
For $1\le i\le t$, let $S_i=\{1,2i,2i+1\}$.
Define a list assignment $L$ of $B_t$ by
\[
L(x)=\bigcup_{i=1}^{t}S_i=\{1,\ldots,2t+1\},
\qquad
L(a_i)=L(b_i)=L(c_i)=S_i.
\]
Then we have 
 $|L(v)|=d_{B_t}(v)+1$ for every vertex $v\in V(B_t)$.

Suppose that $\varphi$ is a proper conflict-free $L$-coloring
of $B_t$.
Fix $i\in \{1,2,\dots,t\}$. Since $N_{B_t}(b_i)=\{a_i,c_i\}$, 
$\varphi(a_i)\ne\varphi(c_i)$.
It  implies that 
$S_i=\{\varphi(a_i),\varphi(b_i),\varphi(c_i)\}$.
Since $N_{B_t}(a_i)=\{b_i,x\}$, $\varphi(x)\ne\varphi(b_i)$.
Since 
$\varphi(x)\notin\{\varphi(a_i),\varphi(c_i)\}$,
$\varphi(x)\notin S_i$.
Thus,  $\varphi(x)\notin L(x)$, a contardiction.
Therefore, $B_t$ has no proper conflict-free $L$-coloring and hence $B_t$ is not proper conflict-free
$(\mathrm{degree}+1)$-choosable.\qed

\bibliographystyle{plain}
\bibliography{reference}

@article{FLRS,
  title={Proper conflict-free and unique-maximum colorings of planar graphs with respect to neighborhoods},
  author={Fabrici, Igor and Lu{\v{z}}ar, Borut and Rindo{\v{s}}ov{\'a}, Simona and Sot{\'a}k, Roman},
  journal={Discrete Applied Mathematics},
  volume={324},
  pages={80--92},
  year={2023},
  publisher={Elsevier}
}

@article{CCKP2025,
  author  = {Cho, Eun-Kyung and Choi, Ilkyoo and
             Kwon, Hyemin and Park, Boram},
  title   = {Proper conflict-free coloring of sparse graphs},
  journal = {Discrete Applied Mathematics},
  volume  = {362},
  pages   = {34--42},
  year    = {2025},
  doi     = {10.1016/j.dam.2024.11.016}
}

@article{KSXlists,
	title={Results on proper conflict-free list coloring of graphs},
	author={Kashima, Masaki and {\v{S}}krekovski, Riste and Xu, Rongxing},
	journal={Discrete Mathematics},
	volume={349},
	number={12},
	pages={115330},
	year={2026},
	publisher={Elsevier}
}

@article{KSXouter,
	title={Proper conflict-free degree-choosability of outerplanar graphs},
	author={Kashima, Masaki and {\v{S}}krekovski, Riste and Xu, Rongxing},
	journal={Discrete Mathematics},
	volume={349},
	number={3},
	pages={114800},
	year={2026},
	publisher={Elsevier}
}

@article{KSXsparse,
	title={Degree-choosability of proper conflict-free list coloring of sparse graphs},
	author={Kashima, Masaki and {\v{S}}krekovski, Riste and Xu, Rongxing},
	journal={arXiv preprint arXiv:2601.15611},
	year={2026}
}

@article{QSC,
	title={Proper conflict-free choosability of sparse graphs with girth at least seven},
	author={Qi, Xingqin and Song, Huimin and Cao, Zhulou},
	journal={arXiv preprint arXiv:2609.11249},
	year={2026}
}

@article{WZ,
	title={Proper conflict-free choosability of planar graphs},
	author={Wang, Yuting and Zhang, Xin},
	journal={arXiv preprint arXiv:2512.22805},
	year={2025}
}

\end{document}